\newtheorem{thm}{Theorem}[section]
\newtheorem{prop}[thm]{Proposition}
\newtheorem{lemma}[thm]{Lemma}
\newtheorem{cor}[thm]{Corollary}
\newtheorem{claim}[thm]{Claim}
\newtheorem{conj}[thm]{Conjecture} 
\newtheorem{ques}[thm]{Question}
\theoremstyle{definition}
\renewcommand{\epsilon}{\varepsilon}
\theoremstyle{remark}
\begin{document}

\title{Universality of graphs with few triangles and anti-triangles}

\author{Dan Hefetz}
\thanks{Research supported by EPSRC grant EP/K033379/1}
\address{School of Mathematics, University of Birmingham, 
Edgbaston, Birmingham, B15 2TT, United Kingdom}
\email{d.hefetz@bham.ac.uk, m.tyomkyn@bham.ac.uk}

\author{Mykhaylo Tyomkyn}

\begin{abstract}
We study \emph{3-random-like} graphs, that is, sequences of graphs in which the densities of triangles and anti-triangles converge to $1/8$. Since the random graph ${\mathcal G}_{n,1/2}$ is, in particular, 3-random-like, this can be viewed as a weak version of quasirandomness. We first show that $3$-random-like graphs are $4$-universal, that is, they contain induced copies of all $4$-vertex graphs. This settles a question of Linial and Morgenstern~\cite{Limo}. We then show that for larger subgraphs, $3$-random-like sequences demonstrate a completely different behaviour. We prove that for every graph $H$ on $n\geq R(10,10)$ vertices there exist $3$-random-like graphs without an induced copy of $H$. Moreover, we prove that for every $\ell$ there are $3$-random-like graphs which are $\ell$-universal but not $m$-universal when $m$ is sufficiently large compared to $\ell$.

\end{abstract}

\maketitle

\section{Introduction}

A graph is called $\ell$-universal if it contains every $\ell$-vertex graph as an induced subgraph. Universality is a well-studied graph property, for instance, the famous Erd\H{o}s-Hajnal conjecture \cite{EH} can be formulated in the following form. 

\begin{conj}
[Erd\H{o}s-Hajnal] For every integer $\ell$ there exists an $\epsilon > 0$ such that every $n$-vertex graph $G$ with no clique or independent set of size $n^\epsilon$ is $\ell$-universal.
\end{conj}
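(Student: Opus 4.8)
This being the Erd\H{o}s--Hajnal conjecture, what follows is a plan of attack rather than a proof. The first step is a routine reformulation: an $n$-vertex graph $G$ fails to be $\ell$-universal exactly when it is induced-$H$-free for some graph $H$ on $\ell$ vertices, and there are only finitely many such $H$. Hence the statement is equivalent to the classical single-graph version --- for every fixed $H$ there is $\epsilon(H) > 0$ such that every induced-$H$-free graph on $n$ vertices contains a clique or an independent set of size at least $n^{\epsilon(H)}$ --- after which one simply takes $\epsilon = \min\{\epsilon(H) : |V(H)| = \ell\}$, a positive minimum over finitely many graphs. So from here the target is to produce $\epsilon(H)$.

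I would handle the single-graph version in two stages. First, reduce to \emph{prime} $H$, that is, $H$ with no nontrivial module. By the Alon--Pach--Solymosi substitution lemma the class of graphs with the Erd\H{o}s--Hajnal property is closed under substituting one such graph for a vertex of another; since disjoint union and join are special cases of substitution (into $\overline{K_2}$ and $K_2$), the property for $H$ follows once it is known for every prime graph occurring in the modular decomposition of $H$. As $P_4$-free graphs are perfect and $P_4$ is the unique prime graph on at most four vertices, it remains to treat prime $H$ on at least five vertices. Second, for such $H$, the natural route is to show that every induced-$H$-free $G$ contains a large ``structured'' part --- a balanced pair of vertex sets spanning a complete or empty bipartite graph (a \emph{pure pair}), or a large vertex subset inducing a graph from a severely restricted family --- and then to iterate: inside the structured part one again finds either the next structured part or a homogeneous set, and if each step costs only a constant factor in the exponent, then after $O(1/\epsilon)$ steps a homogeneous set of size $n^{\epsilon}$ emerges.

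The obstacle is precisely this prime case, which is why the conjecture is open. All known general tools --- the original Erd\H{o}s--Hajnal bound of $2^{c\sqrt{\log n}}$, its recent improvement to roughly $2^{c\sqrt{\log n\,\log\log n}}$ by Buci\'c, Nguyen, Scott and Seymour, and the pure-pair machinery of Chudnovsky, Scott, Seymour and Spirkl that settles $H = C_5$ --- deliver only a quasi-polynomial homogeneous set for a general prime $H$, exponentially short of the required $n^{\epsilon}$. Converting such an iteration into one that loses a constant factor only constantly many times, uniformly over all prime $H$, is the crux, and I do not expect the substitution-plus-pure-pair framework to reach it unaided. A speculative but tempting direction is to exploit the fact that induced-$H$-freeness is preserved under blow-ups, so that an extremal $G$ must be ``self-similar'' across many scales; combining a regularity-type decomposition with this self-similarity might force a recursion that terminates after only polynomially many rounds --- but making that heuristic into a theorem is exactly the hard, and currently unsolved, part.
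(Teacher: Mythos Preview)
The paper does not prove this statement: it is stated as a \emph{conjecture} in the introduction purely for motivation, to situate the paper's questions about universality under density conditions alongside the classical Erd\H{o}s--Hajnal problem. There is no proof in the paper to compare your proposal against.

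You recognise this yourself --- your submission is explicitly a ``plan of attack rather than a proof'' --- and your summary of the known reductions (substitution to prime graphs, the Alon--Pach--Solymosi lemma, the $P_4$/cograph base case) and of the current quasi-polynomial barrier is accurate and up to date. The results you cite on $C_5$ and the $2^{c\sqrt{\log n\,\log\log n}}$ bound in fact postdate the paper, which only mentions the conjecture in passing. Since the paper makes no claim to prove Erd\H{o}s--Hajnal, there is nothing further to assess here; your write-up is a reasonable survey of the obstruction, not a gap in a proof.
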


Recently Linial and Morgenstern \cite{Limo} asked a question of a similar flavour. Instead of forbidding large cliques and independent sets (anti-cliques) they asked, what happens if the graph $G$ contains only \emph{few} cliques and anticliques of a certain order $m$. The present paper addresses this question.

First, let us introduce some useful notation and terminology, most of which is standard (see e.g. \cite{Bela}). For a graph $G$ write $V(G)$ and $E(G)$ for its sets of vertices and edges, respectively. Let $|G| = |V(G)|$ denote the \emph{order} of $G$ and let $e(G) = |E(G)|$ denote its \emph{size}. The \emph{complement} of $G$ is denoted by $\overline{G}$. For a set $S \subseteq V(G)$ put $G[S]$ for the subgraph of $G$ induced on the set $S$. For a set $S \subseteq V(G)$ and a vertex $u \in V(G)$, let $N_G(u, S) = \{w \in S : uw \in E(G)\}$ denote the set of neighbours of $u$ in $S$ and let $d_G(u, S) = |N_G(u, S)|$ denote the \emph{degree} of $u$ into $S$. We abbreviate $N_G(u, V(G))$ to $N_G(u)$ and $d_G(u, V(G))$ to $d_G(u)$. The former is referred to as the \emph{neighbourhood} of $u$ in $G$ and the latter as its \emph{degree}. We use $d_G(u,v)$ to denote the \emph{co-degree} of $u$ and $v$, that is, $\left|N_G(u) \cap N_G(v)\right|$ and the somewhat less standard $d_G(u,-v)$ to denote $\left|N_G(u) \setminus N_G(v)\right|$. Often, when there is no risk of confusion, we omit the subscript $G$ from the notation above. 

For graphs $G$ and $H$, put $D_H(G)$ for the number of induced copies of $H$ in $G$ and $p_H(G)$ for the corresponding density: 

$$
p_H(G) = {\binom{n}{\left|H\right|}}^{-1}\cdot D_H(G) \,.
$$

The quantity $p_H(G)$ can be also interpreted as the probability that a randomly picked set of $|H|$ vertices of $G$ induces a copy of $H$. 

For $H=K_2$, a single edge, $D_H(G)$ is simply $e(G)$ and thus we write $p_e(G)$ for $p_{K_2}(G)$, the \emph{edge density} of $G$. For graphs of order $3$, since they are determined up to isomorphism by their size, we write $D_i(G)$ for $D_H(G)$ and $p_i(G)$ for $p_H(G)$, where $i = e(H)$. The vector $(p_0(G),\dots, p_3(G))$ is called the \emph{$3$-local profile} of $G$.

Let $\mathcal{G}=(G_k)_{k=1}^{\infty}$ be a sequence of graphs, where $G_k = (V_k, E_k)$ is of order $n_k := \left|V_k\right|$ and $n_k$ tends to infinity with $k$. If for some graph parameter $\lambda$ the limit $\lim_{k\rightarrow \infty} \lambda(G_k)$ exists, we denote it by $\lambda(\mathcal{G})$. A sequence $\mathcal{G}$ is said to be \emph{$\ell$-universal} if $G_k$ is $\ell$-universal for every sufficiently large $k$. 

Linial and Morgenstern proved in \cite{Limo} that there exists a constant $\rho = 0.159181\dots$ such that every $\mathcal{G}$ with $p_0(\mathcal G), p_3(\mathcal G) < \rho$ is $3$-universal and asked whether an analogous result holds for higher universalities.

\begin{ques} [\cite{Limo}] \label{qlimo}
Given $\ell\geq 4$, is there some $\epsilon > 0$ such that every graph sequence $\mathcal{G}$ with $p_0(\mathcal{G}), p_3(\mathcal{G}) < \frac{1}{8}+\epsilon$ is $\ell$-universal? 
\end{ques}

Note that our definition of \emph{$\ell$-universal} sequences is slightly different from the one given in \cite{Limo}. The latter required additionally that $p_{G_k}(H)$ be bounded away from $0$ for each $H$ of order $\ell$. However for our purposes (i.e. answering Question~\ref{qlimo}) these definitions are equivalent due to the induced graph removal lemma of Alon, Fischer, Krivelevich and Szegedy \cite{AFKS}.

It was pointed out by the second author that for every $\ell \geq 5$ the answer to Question~\ref{qlimo} is negative. Though his counterexample has already appeared in \cite{Limo}, for the sake of completeness we will repeat it in the next section of the present paper. 

This leaves $\ell=4$ as the only remaining open case of Question~\ref{qlimo}. Our first main result in this paper, Theorem \ref{thm4univ}, answers it in the affirmative, thereby settling Question~\ref{qlimo} in full.

Let us define a sequence of graphs $\mathcal{G}$ to be \emph{$t$-random-like}, or \emph{tRL} for brevity, if $p_{K_t}(\mathcal{G})=p_{\overline{K_t}}(\mathcal{G})=2^{-\binom{t}{2}}$. Our choice of terminology stems from the fact that such a sequence has approximately the same number of $t$-cliques and $t$-anticliques, that is, independent sets of size $t$, as the random graph $\mathcal{G}_{n,1/2}$. Note that for $\mathcal{G}$ to be 2RL it is sufficient to have $p_e(\mathcal{G})=1/2$. We will be mostly interested in 3RL sequences; in our terminology $\mathcal{G}$ is 3RL if and only if $p_0(\mathcal{G})=p_3(\mathcal{G})=1/8$. 

A standard diagonalisation argument shows that in order to answer Question \ref{qlimo} for $\ell=4$ affirmatively, it suffices to prove the following assertion.

\begin{thm}\label{thm4univ}
Every 3RL sequence is $4$-universal.
\end{thm}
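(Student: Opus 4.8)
The plan is to show that a 3RL sequence $\mathcal{G}$ contains induced copies of each of the eleven graphs on four vertices. First I would dispose of the ``self-complementary-robust'' cases: the path $P_4$ is self-complementary, and for graphs $H$ on $4$ vertices, $p_H(\mathcal{G}) = 0$ forces $p_{\overline H}(\mathcal{G})$ to absorb the missing mass, so it suffices to prove, for each complementary pair $\{H, \overline H\}$, that $p_H(\mathcal{G}) + p_{\overline H}(\mathcal{G}) > 0$; by the symmetry $p_i(\mathcal{G}) = p_{3-i}(\overline{\mathcal{G}})$ and the hypothesis $p_0(\mathcal{G}) = p_3(\mathcal{G}) = 1/8$, the $3$-local profile of $\mathcal{G}$ is $(1/8, x, 3/4 - x, 1/8)$ for some $x \in [0, 3/4]$, and this already pins down a lot. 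The genuinely dangerous $4$-vertex graphs are the ones whose absence is not obviously obstructed by having the right triangle and anti-triangle counts — I expect the hard cases to be $K_4$ minus an edge (the ``diamond''), its complement, and the ``paw'' / ``co-paw'' pair, i.e. the graphs with a mix of triangles and anti-triangles but an unbalanced degree sequence.

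The main tool I would use is a counting identity expressing $p_H(\mathcal{G})$ for $|H| = 4$ in terms of quantities one step down. For a triple of vertices $\{a, b, c\}$ inducing a given $3$-vertex graph, the number of ways to extend it to each $4$-vertex graph is governed by the ``link'' data: for how many vertices $u$ is $u$ adjacent to exactly $\{a\}$, to exactly $\{a, b\}$, etc. Summing over all triples and using convexity (Cauchy–Schwarz / Jensen) on these link distributions, one gets inequalities relating the $p_H(\mathcal{G})$ to the $3$-local profile $(1/8, x, 3/4-x, 1/8)$. Concretely, I would write the number of induced $K_4$'s (resp. $\overline{K_4}$'s) and of induced $K_4^-$'s etc. as a sum over vertices $v$ of expressions in $D_i(G - v)$ minus $D_i(G)$-type terms, i.e. use the fact that $\sum_v D_H(G - v) = (n - |H|) D_H(G)$ together with local estimates; then a shortfall in one $4$-vertex count propagates to force another count to be large, and eventually to contradict $p_0(\mathcal{G}) = p_3(\mathcal{G}) = 1/8$ or to force some other $p_H(\mathcal{G}) > 0$.

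More precisely, for each $4$-vertex graph $H$ I would establish a lemma of the form: if $p_H(\mathcal{G}) = 0$, then the $3$-local profile and the remaining $4$-vertex densities are constrained to a small region, and in that region one checks (by finite linear/quadratic reasoning) that either $p_0$ or $p_3$ is forced away from $1/8$, or some other forbidden configuration arises. The cleanest instances: if $\mathcal{G}$ has no induced $K_4$ and no induced $\overline{K_4}$, a supersaturation/stability argument around the Ramsey-type extremal configurations should already push $p_0 + p_3$ above $1/4$. For the diamond $K_4^-$ and the paw, I would count, for each edge $uv$ (resp. non-edge), the co-degree $d(u,v)$ and the ``exclusive degrees'' $d(u,-v), d(v,-u)$; an induced diamond on $\{u, v, w, x\}$ with $uv, uw, ux, vw, vx \in E$ and $wx \notin E$ corresponds to choosing $u, v$ and then two common neighbours $w, x$ that are themselves non-adjacent, so forbidding diamonds means every pair's common neighbourhood is a clique, which together with the prescribed triangle density $3/4 - x$ and anti-triangle density $x$ forces structure (each common neighbourhood being a clique feeds back into $D_3(G)$ and $D_0(G)$) incompatible with $p_0(\mathcal{G}) = 1/8$.

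The main obstacle I anticipate is making the ``small region'' arguments genuinely rigorous rather than heuristic: the constraints from the $3$-local profile alone do not determine the $4$-local profile, so I will need a handful of extra inequalities (flag-algebra-style, but elementary: Cauchy–Schwarz on degrees within prescribed triangles/anti-triangles, plus the identity $\sum_v D_H(G-v) = (n-|H|)D_H(G)$) and then a careful case analysis over which $4$-vertex graphs are assumed absent — possibly several at once, since forbidding one may be survivable only if another is also forbidden. I would organise the proof as a sequence of claims, each ruling out the absence of one $H$ (or one complementary pair) given everything proved so far, with $K_4^-$ and its complement handled last as they are the tightest. Throughout, the constants $1/8$ are used in an essential, non-robust way, which is consistent with the paper's later results showing $3$RL sequences need not be $5$-universal.
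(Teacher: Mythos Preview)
Your high-level plan --- reduce to one graph from each complementary pair and argue by contradiction case by case --- matches the paper's organisation, but several specifics are off and the proposed execution diverges substantially from what the paper actually does.

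First, two concrete errors. Your reduction ``it suffices to prove $p_H(\mathcal{G}) + p_{\overline H}(\mathcal{G}) > 0$'' is wrong as stated: knowing the sum is positive does not give both summands positive. The correct reduction (which the paper uses) is that $\overline{\mathcal{G}}$ is 3RL whenever $\mathcal{G}$ is, so proving ``3RL $\Rightarrow$ $H$-containing'' for one $H$ from each pair automatically gives the complement. Second, your $3$-local profile $(1/8, x, 3/4 - x, 1/8)$ with $x$ free is too weak: Goodman's formula plus a simple double count (the paper's Lemma~2.2 and Corollary~2.3) force $p_1 = 3p_3$ and $p_2 = 3p_0$, hence $x = 3/8$ exactly. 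This is used repeatedly downstream.

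On approach, the paper does \emph{not} proceed by a uniform ``counting + convexity on links'' scheme. Each of the six cases gets its own ad hoc tool. For $T^+$ and $K_4^-$ the argument is purely structural: fix a triangle of ordinary vertices, partition the rest by their degree into the triangle, and read off that $p_0 = 0$. For $K_4$ and $K_{1,3}$ the key input is Mantel's theorem applied to (complements of) neighbourhoods, followed by Erd\H{o}s--Simonovits stability to pin down the global structure and reach a contradiction --- not a Ramsey-type supersaturation as you suggest. For $P_4$ the paper uses Seinsche's theorem (cographs: for every induced subgraph, either it or its complement is disconnected), which you do not mention at all. Only the final and hardest case, $C_4$, is handled along the lines you sketch: a Cauchy--Schwarz / convexity argument on co-degrees forces $\sum_{u,v}|d(u,v)-n/4| = o(n^3)$, i.e.\ $\mathcal{G}$ is quasirandom, contradicting $C_4$-freeness. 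You identified the diamond and the paw as the hard cases; in the paper they are the easiest, and the hard ones are $C_4$ and $P_4$, precisely the cases your outline says least about.

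Your plan might be pushed through as an alternative, flag-algebra-flavoured proof, but as written it is a sketch with the key case analyses missing, and the two errors above would need fixing before any of the counting could be made to bite.
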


Theorem \ref{thm4univ} is related to the \emph{quasirandomness} of graphs as well. This is a central notion in extremal and probabilistic graph theory. It was introduced by Thomason in \cite{Thomason2} and was extensively studied in many subsequent papers. In particular, it was proved by Chung, Graham and Wilson \cite{CGW} (see also \cite{alon:probabilistic} for more details) that if $p_H(\mathcal{G}) = p_H(\mathcal{G}_{n, 1/2})$ holds for every graph $H$ of order $4$, then the same equality holds for every graph $H$ of \emph{any} fixed size. In the terminology of \cite{CGW} this fact is denoted by $P_1(4)\Rightarrow P_1(s)$. On the other hand, it was pointed out in \cite{CGW} that the property $P_1(3)$, that is, containing the ``correct'' number of induced copies of every $3$-vertex graph, is not sufficient to ensure quasirandomness. As we shall see in Section \ref{secprelim}, $P_1(3)$ is in fact equivalent to 3RL. Thus, our results in this paper can be viewed as the study of $P_1(3)$. Under this viewpoint Theorem \ref{thm4univ} shows that, while 3RL graphs need not satisfy $P_1(4)$, they still must contain a positive density of every possible induced $4$-vertex graph. 

Having resolved Question~\ref{qlimo}, we know that 3RL implies $4$-universality, but is not enough to ensure $\ell$-universality for any larger $\ell$. A natural follow up question to ask is, whether there still exist infinite classes of graphs $H$ that must be contained in every 3RL sequence $\mathcal{G}$. Cliques, paths, cycles and stars are natural candidates for such classes. We shall answer this question in the negative by providing counterexamples for each of these classes. In fact, our second main result, Theorem \ref{thmoverkill} provides, perhaps surprisingly, a counterexample for \emph{any single} graph which is not too small. Throughout this paper $R(k,\ell)$ will stand, as usual, for the corresponding Ramsey number (see \cite{Bela} for more background details). 

\begin{thm}\label{thmoverkill}
For every graph $H$ of order at least $R(10,10)$ there exists a 3RL sequence $\mathcal{G}$, where no $G_k\in \mathcal{G}$ contains a copy of $H$ as an induced subgraph.
\end{thm}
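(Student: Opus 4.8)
The plan is to first reduce the statement, by Ramsey's theorem and a complementation, to the construction of a $3$RL sequence of $K_{10}$-free graphs, and then to realise such a sequence via an explicit multipartite-type family whose parameters are tuned by a continuity argument.

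\emph{Reduction.} Since $|H|\ge R(10,10)$, one of $H$ and $\overline H$ contains $K_{10}$. Complementation swaps $p_0$ and $p_3$, so a sequence is $3$RL if and only if the sequence of complements is; and $G$ has an induced copy of $H$ if and only if $\overline G$ has an induced copy of $\overline H$. Hence, after possibly replacing $H$ by $\overline H$ (and complementing the sequence we eventually produce), we may assume $K_{10}\subseteq H$. But then every $K_{10}$-free graph contains no copy of $H$ at all — induced or not — since any copy of $H$ exhibits a $K_{10}$. So it suffices to construct a $3$RL sequence of $K_{10}$-free graphs.

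\emph{The template.} Fix $r\le 9$, part sizes $\alpha_1,\dots,\alpha_r>0$ with $\sum_i\alpha_i=1$, and densities $q_{ij}\in[0,1]$ for $1\le i<j\le r$. On $n$ vertices split into independent sets $V_1,\dots,V_r$ with $|V_i|\approx\alpha_i n$, place between $V_i$ and $V_j$ a random bipartite graph of density $q_{ij}$. Since any clique meets each $V_i$ in at most one vertex, the resulting graph has clique number at most $r\le 9$, hence is $K_{10}$-free \emph{regardless of the parameters}; and a standard concentration argument shows that $p_0$ and $p_3$ converge (as $n\to\infty$) to explicit functions of the $\alpha_i$ and $q_{ij}$. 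For instance, in the symmetric case $r=9$, $\alpha_i\equiv 1/9$, $q_{ij}\equiv q$, one gets $p_3\to \tfrac{56}{81}q^3$ and $p_0\to\tfrac{1}{81}+\tfrac{8}{27}(1-q)^2+\tfrac{56}{81}(1-q)^3$. If some choice of parameters makes both limits equal to $\tfrac{1}{8}$, then taking $G_k$ to be the corresponding blow-up on $k$ vertices gives the required sequence.

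\emph{The obstacle.} The crux — and, I expect, the genuinely delicate step — is showing that $(p_0,p_3)=(\tfrac{1}{8},\tfrac{1}{8})$ is actually attained within this $K_{10}$-free family. The most symmetric candidates narrowly fail: for the uniform $r$-partite template with common density $p$, imposing $p_3=\tfrac{1}{8}$ forces, after expanding and completing the square, $p_0-\tfrac{1}{8}=\tfrac{3}{r^2}\bigl[(r-1)p-\tfrac{r}{2}\bigr]^2$, which is strictly positive for every $r$ (equality would need $r(r-2)=(r-1)^2$). One must therefore use the remaining degrees of freedom — unequal part sizes, unequal densities $q_{ij}$, or even a non-complete-multipartite skeleton such as a quasirandom blow-up of a fixed graph $F$ with $\omega(F)\le 9$ — to cancel this small overshoot. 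Concretely, one would exhibit one configuration in the family with $p_0=\tfrac{1}{8}$ and $p_3>\tfrac{1}{8}$ and another with $p_0=\tfrac{1}{8}$ and $p_3<\tfrac{1}{8}$ (or, failing the latter, use that the set of profiles attainable in the limit by $K_{10}$-free sequences is closed and already has $(\tfrac{1}{8},\tfrac{1}{8})$ on its boundary, then approximate and diagonalise), and invoke a two-dimensional intermediate-value argument to land exactly on $(\tfrac{1}{8},\tfrac{1}{8})$. Feeding the resulting weighted template into the blow-up construction above, and complementing if we were in the case $K_{10}\subseteq\overline H$, completes the proof.
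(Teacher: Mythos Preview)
Your Ramsey reduction is correct and matches the paper's. The gap is precisely where you flag it yourself: you never verify that your multipartite template actually hits $(p_0,p_3)=(\tfrac18,\tfrac18)$. You show the symmetric case misses (the identity $p_0-\tfrac18=\tfrac{3}{r^2}\bigl[(r-1)q-\tfrac r2\bigr]^2>0$ along $p_3=\tfrac18$ is a nice observation), but the resolution is a promissory note: ``one would exhibit'' two configurations straddling the target and apply an IVT. You neither exhibit them nor argue they exist; the parenthetical fallback (``$(\tfrac18,\tfrac18)$ is on the boundary of the $K_{10}$-free profile set, so approximate and diagonalise'') assumes exactly what has to be proved. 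Since this construction is the entire content of the theorem once the reduction is done, the proposal as written is incomplete.

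The paper avoids this difficulty with a different, fully explicit construction. Rather than a multipartite template, it defines a doubling operation $f$: from any $G$ on $n$ vertices, form $f(G)$ on two copies $V_1,V_2$ of $V(G)$ by keeping $G$ on each copy and joining $v_i\in V_1$ to $v'_j\in V_2$ whenever $i\ne j$ and $ij\notin E(G)$. One checks that $f(\mathcal G)$ is 3RL iff $(p_1+p_3)(\mathcal G)=\tfrac12$, reducing the problem to a single scalar equation. Applying $f$ to the circulant graphs $G_k^r$ on $\mathbb Z_k$ with edges between vertices at cyclic distance $>k/r$, a short argument shows $\omega(f(G_k^r))\le\lceil r\rceil-1$, and a direct computation gives $(p_1+p_3)(G_k^r)=\bigl(\tfrac{r-3}{r}\bigr)^2+\tfrac{3}{r^2}+o(1)$, which equals $\tfrac12$ at $r=6+2\sqrt3\approx 9.46$. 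Thus one obtains an explicit 3RL sequence with no $K_{10}$, with no continuity or intermediate-value argument needed.
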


According to \cite{SmallRam}, the best currently known bounds on $R(10,10)$ are $798\leq R(10,10)\leq 23556$ (the standard upper bound for Ramsey numbers yields $R(10,10)\leq\binom{10+10-2}{10-1}=48620$). 

Theorem \ref{thmoverkill} combined with Theorem \ref{thm4univ} and the induced graph removal lemma~\cite{AFKS} immediately give the following corollary.

\begin{cor}\label{cor4univ}
There exists an $\epsilon>0$ such that for every graph $H$ of order at least $R(10,10)$ there is a sequence $\mathcal{G}$, where no $G_k\in \mathcal{G}$ contains an induced copy of $H$, but $p_J(\mathcal{G})>\epsilon$ for every $4$-vertex graph $J$.  
\end{cor}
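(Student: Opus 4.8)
The plan is to derive the corollary from Theorems~\ref{thmoverkill} and~\ref{thm4univ} together with the induced removal lemma of~\cite{AFKS}, in two steps: first show that \emph{every} 3RL sequence carries a uniform positive density of each $4$-vertex graph, and then plug in the sequence produced by Theorem~\ref{thmoverkill}.

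\textbf{Step 1.} \emph{There is an absolute constant $\epsilon>0$ such that $\liminf_{k\to\infty} p_J(G_k)\geq\epsilon$ for every 3RL sequence $\mathcal{G}=(G_k)$ and every graph $J$ with $|J|=4$.} I would prove this by contradiction and diagonalisation. Suppose it fails. Then for each $m$ there is a 3RL sequence and a $4$-vertex graph $J_m$ with $\liminf_k p_{J_m}(G_k)<1/m$; since there are only finitely many $4$-vertex graphs, some fixed $J$ plays the role of $J_m$ for infinitely many $m$. From the $m$-th of these sequences I pick a single graph $H_m$ of order at least $m$ with $|p_0(H_m)-1/8|,|p_3(H_m)-1/8|<1/m$ and $p_J(H_m)<1/m$ — possible because the first two conditions hold for all large indices and the third for infinitely many. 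Then $\mathcal{H}=(H_m)$ is a 3RL sequence with $p_J(\mathcal{H})=0$, i.e. $D_J(H_m)=o(|H_m|^4)$. Now invoke the induced removal lemma: for a sequence $\delta_m\to 0$ decaying slowly enough that $D_J(H_m)$ stays below the lemma's threshold $\epsilon_0(\delta_m,J)\cdot|H_m|^4$, one can add and/or delete at most $\delta_m|H_m|^2$ edges of $H_m$ to obtain a graph $H_m'$ with no induced copy of $J$. Flipping one edge changes each of $D_0,\dots,D_3$ by less than $|H_m|$, so $H_m$ and $H_m'$ differ in their $3$-local profiles by $O(\delta_m)=o(1)$; hence $\mathcal{H}'=(H_m')$ is again 3RL. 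But no $H_m'$ is $4$-universal, so $\mathcal{H}'$ is not a $4$-universal sequence, contradicting Theorem~\ref{thm4univ}.

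\textbf{Step 2.} Given a graph $H$ with $|H|\geq R(10,10)$, Theorem~\ref{thmoverkill} supplies a 3RL sequence $\mathcal{G}$ no member of which has an induced copy of $H$. Passing to a subsequence (still 3RL and still induced-$H$-free) along which each density $p_J(G_k)$ with $|J|=4$ converges, Step 1 gives $p_J(\mathcal{G})\geq\epsilon$ for every such $J$. This is exactly what the corollary asserts (replacing $\epsilon$ by $\epsilon/2$ if strict inequality is wanted), and $\epsilon$ does not depend on $H$.

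The only point requiring care is the choice of the decay rate $\delta_m\to 0$ in Step 1: the function $\delta\mapsto\epsilon_0(\delta,J)$ from the removal lemma is not explicit, so $\delta_m$ must be made to tend to $0$ slowly enough that $D_J(H_m)<\epsilon_0(\delta_m,J)\cdot|H_m|^4$ for all $m$ — a routine interleaving argument using $D_J(H_m)/|H_m|^4\to 0$. Alternatively, Step 1 can be phrased via compactness in the space of graphons: the 3RL condition defines a closed, hence compact, set of graphons, on which the induced density of $J$ is a continuous functional that is everywhere positive (this positivity being precisely what Theorem~\ref{thm4univ} together with the removal lemma provide), and a positive continuous function on a compact set is bounded below by a positive constant.
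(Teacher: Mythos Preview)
Your proof is correct and follows exactly the approach the paper indicates: it combines Theorem~\ref{thmoverkill}, Theorem~\ref{thm4univ}, and the induced removal lemma of~\cite{AFKS}, with the paper simply asserting that the corollary follows ``immediately'' from these three ingredients while you spell out the routine diagonalisation that makes this precise. The graphon/compactness alternative you sketch at the end is a pleasant aside but not needed.
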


Theorem \ref{thmoverkill} and Corollary \ref{cor4univ} show that, for sufficiently large values of $\ell$, having either the ``correct'' densities of triangles and anti-triangles or positive densities of every $4$-vertex graph is far from being enough to ensure $\ell$-universality. This goes in stark contrast with $\mathcal{G}$ having the ``correct'' densities of \emph{all} induced $4$-vertex graphs, which implies that $\mathcal{G}$ is quasirandom and therefore $\ell$-universal for every $\ell$.

Having constructions of 3RL sequences which are only $\ell$-universal for very small values of $\ell$ on the one hand and the random graph $\mathcal{G}_{n,1/2}$ (which is $\ell$-universal for every fixed $\ell$) on the other hand, it is natural to ask, if for arbitrarily large $\ell$ there exists a 3RL sequence which is $\ell$-universal but not $f(\ell)$-universal for some function $f$. This would show that no fixed universality is sufficient to ensure all other universalities. Our third theorem shows that this is indeed the case in the following strong sense.


\begin{thm}\label{thmarbitrary}
For every $\ell$ there exists a 3RL sequence $\mathcal{G}_{\ell}$ such that $p_H(\mathcal{G_\ell})>0$ for every graph $H$ of order $2^\ell$, but $\mathcal{G}_\ell$ is not $24 \ell \cdot 2^\ell$-universal.
\end{thm}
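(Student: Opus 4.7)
My plan is to realise $\mathcal G_\ell$ as the sequence of balanced independent-set blow-ups of a single finite base graph $H=H(\ell)$. Specifically, I pick $H$ on $n_0=n_0(\ell)$ vertices (with $48 \mid n_0^3$) satisfying (i) every graph on $2^\ell$ vertices is an induced subgraph of $H$; (ii) $\omega(H)<24\ell\cdot 2^\ell$; and (iii) the numbers of triangles and anti-triangles of $H$ are both exactly $n_0^3/48$. I then take $G_k$ to be the graph obtained from $H$ by replacing each vertex by an independent set (``part'') of size $k$, with all edges between two parts whenever the corresponding vertices of $H$ are adjacent, and set $\mathcal G_\ell=(G_k)_{k\ge1}$.

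Given (i)--(iii), the three required conclusions are short counts. Triangles in $G_k$ are exactly the triples using one vertex from each of three parts whose $H$-images form a triangle, so $T(G_k)=T(H)\,k^3$ and $p_3(G_k)=6T(H)k^3/[n_0k(n_0k-1)(n_0k-2)]\to 6T(H)/n_0^3=1/8$. An analogous count for anti-triangles, in which the contributions of triples inside one or two parts are only $O(k^2)$, yields $p_0(G_k)\to 1/8$; hence $\mathcal G_\ell$ is 3RL. A clique in $G_k$ uses at most one vertex per part, so $\omega(G_k)=\omega(H)<24\ell\cdot 2^\ell$ and $K_{24\ell\cdot 2^\ell}\not\subseteq G_k$, confirming the failure of $24\ell\cdot 2^\ell$-universality. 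Finally, for any $F$ of order $2^\ell$, an induced copy of $F$ in $H$ (provided by (i)) lifts to at least $k^{2^\ell}$ induced copies of $F$ in $G_k$ (one vertex per part of the copy), so $p_F(G_k)$ tends to a strictly positive constant.

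To build $H$, I would start with a realisation of $\mathcal G_{n_0,1/2}$ at $n_0\approx 2^{2^\ell}$, say $n_0=12\cdot 2^{2^\ell}$ to secure the divisibility. Standard first-moment and concentration estimates yield, with high probability, (i); the bounds $\omega(H_0),\alpha(H_0)\le (2+o(1))\log_2 n_0=(2+o(1))\cdot 2^\ell$, well below $24\ell\cdot 2^\ell$; and values of $T,\overline T$ concentrated within $O(n_0^{5/2})$ of $\binom{n_0}{3}/8=n_0^3/48-O(n_0^2)$. Fixing any such realisation and then applying $O(n_0)$ carefully chosen edge swaps — each selected, using the near-uniformity of the codegree and non-codegree distributions of a random graph, to shift $(T,\overline T)$ by a small prescribed amount — one can drive $(T,\overline T)$ to the exact targets in (iii). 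Altering $O(n_0)$ out of $\Theta(n_0^2)$ edges is negligible: every fixed $2^\ell$-vertex graph still has $\Omega(n_0^{2^\ell})$ induced copies in $H$, and $\omega$ remains $O(\log n_0)=O(2^\ell)$.

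The main obstacle is this last tuning step: forcing the two-parameter equality $T(H)=\overline T(H)=n_0^3/48$ via integer-valued moves that perturb both quantities at once. I expect to handle it by an elementary parity/gcd argument showing that the joint distribution of $(d(u,v),\,|\overline N(u)\cap \overline N(v)|)$ over pairs in a typical random graph realises every sufficiently small integer vector as the net change of a suitable edge swap, after which a greedy composition of $O(n_0)$ such swaps lands exactly on the target pair.
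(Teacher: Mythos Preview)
Your approach is genuinely different from the paper's, but it has a concrete error and a resulting parameter conflict that does not resolve with the choices you describe.

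First, the anti-triangle count is wrong. In the independent-set blow-up, triples lying in one part, or in two non-adjacent parts, contribute $n_0\binom{k}{3}$ and $2\,\overline e(H)\binom{k}{2}k$ anti-triangles; both are $\Theta(k^3)$, not $O(k^2)$. The correct limit is $p_0(G_k)\to\bigl(n_0+6\,\overline e(H)+6\,\overline T(H)\bigr)/n_0^3$, so (iii) must read $T(H)=n_0^3/48$ together with $\overline T(H)=n_0^3/48-n_0/6-\overline e(H)$, not $T=\overline T=n_0^3/48$. Second, even with the corrected targets, the tuning step clashes with (i) and (ii). In $G_{n_0,1/2}$ the triangle count has standard deviation $\Theta(n_0^2)$ (not $O(n_0^{5/2})$), so you need $\Omega(n_0)$ single-edge changes, each altering $T$ by $\Theta(n_0)$. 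Each change can affect up to $\binom{n_0-2}{t-2}$ induced copies of a fixed $t$-vertex graph ($t=2^\ell$), and the initial number of copies is only $\Theta\bigl(\binom{n_0}{t}2^{-\binom{t}{2}}\bigr)$; preserving all of them forces $n_0\gtrsim 2^{\binom{t}{2}}$. But then $\omega(H)\approx 2\log_2 n_0\gtrsim t^2=4^\ell$, which exceeds $24\ell\cdot 2^\ell$ once $\ell\ge 7$, so (ii) fails. With your stated $n_0=12\cdot 2^{2^\ell}$ the swaps may instead wipe out all copies of some $F$, so (i) is not preserved. You also do not justify why $\omega$ stays $O(\log n_0)$ after the edits.

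The paper sidesteps all tuning. It joins two equal-sized graphs by a uniformly random bipartite graph, $G_1\oplus G_2$, shows that this operation sends a pair of Goodman graphs with complementary $3$-profiles (in particular, two 3RL graphs) to a 3RL graph, and iterates $\ell$ times from $K_{n,n}\oplus\overline{K_{n,n}}$. The 3RL property is then automatic; positivity of every $p_H$ for $|H|=2^\ell$ comes from picking one vertex in each of the $2^\ell$ deterministic blocks (where all edges are random), and the failure of $24\ell\cdot 2^\ell$-universality follows by pigeonhole into a block plus the Ramsey lower bound.
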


The rest of this paper is organised as follows. In the next section we establish some basic properties of 3RL sequences and recall the construction of a 3RL sequence which is not $5$-universal. In Section $\ref{sec4univ}$ we prove our first main result, Theorem \ref{thm4univ}, by considering each `forbidden' $4$-vertex subgraph individually and applying different methods in different cases. In Section \ref{seclarge} we prove our second main result, Theorem \ref{thmoverkill}. This will be achieved through constructing a 3RL sequence in which no graph contains a clique of size $10$; we think that this construction is also of independent interest. In Section \ref{sechigh} we prove Theorem \ref{thmarbitrary} by adapting a construction of Chung, Graham and Wilson from their seminal paper on quasirandomness \cite{CGW}. Finally, in Section \ref{secdisc} we state a number of open questions and outline some possible extensions of our results. 

\section{Preliminaries}\label{secprelim}

Goodman's Theorem \cite{Goodman} gives a formula for the number of triangles and anti-triangles in a graph $G=(V,E)$ of order $n$:
\begin{equation}\label{eq: 1}
D_0(G) + D_3(G) = \frac{1}{2}\left[-\binom{n}{3}+\sum_{v \in V} \left[\binom{d_{G} (v)}{2}+ \binom{d_{\overline{G}}(v)}{2}\right]\right] \,.
\end{equation}

For densities this translates into
$$
p_0(G) + p_3(G) = \frac{\sum_{v \in V} \left[\binom{d_{G} (v)}{2}+ \binom{d_{\overline{G}} (v)}{2}\right]}{2\binom{n}{3}}-\frac{1}{2} \,.
$$ 
Since $d_G(v)+d_{\overline{G}}(v)=n-1$ for every $v$, and due to the convexity of binomial coefficients, the minimal value of $D_0+D_3$ is achieved whenever the degree of each vertex $v$ is as close to $n/2$ as possible, resulting in $p_0+p_3$ being asymptotically $1/4$. This is known as \emph{Goodman's bound}. Consequently, let us call $\mathcal{G}$ a \emph{Goodman sequence} if $(p_0+p_3)(\mathcal{G})=1/4$; note that we do not require the existence of the individual limits $p_0(\mathcal{G})$ and $p_3(\mathcal{G})$. Needless to say that 3RL sequences are Goodman. Applying a common abuse of terminology, we will talk about Goodman and 3RL \emph{graphs} referring to respective sequences of graphs. Notice that, since $D_H(G)=D_{\overline{H}}(\overline{G})$, a graph $G$ is Goodman (respectively 3RL) if and only if $\overline{G}$ is Goodman (respectively 3RL).

A vertex $v$ of a graph $G$ on $n$ vertices is said to be \emph{$\epsilon$-ordinary} if $\left|d_G(v)-n/2\right|<\epsilon n$ and \emph{$\epsilon$-exceptional} otherwise. Occasionally we will suppress the $\epsilon$ in the above notation if there is no ambiguity. The following fact is an immediate consequence of Goodman's Theorem. It asserts that a Goodman graph is essentially $n/2$-regular. 

\begin{prop}\label{propgood}
For every $\epsilon > 0$ and every Goodman sequence $\mathcal{G}=(G_k)_{k=1}^{\infty}$ there exists an integer $k_0(\epsilon, \mathcal{G})$ such that for every $k \geq k_0$ at most $\epsilon n_k$ vertices of $G_k$ are $\epsilon$-exceptional.
\end{prop}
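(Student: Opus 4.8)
The plan is to use Goodman's identity~\eqref{eq: 1} directly, treating the sum $\sum_v \left[\binom{d_G(v)}{2} + \binom{d_{\overline G}(v)}{2}\right]$ as a convex function that is minimized at the regular configuration, and then quantifying the slack. First I would record the elementary inequality that, for integers $a + b = n-1$, one has $\binom{a}{2} + \binom{b}{2} \geq 2\binom{(n-1)/2}{2}$ with the excess growing quadratically in the deviation: more precisely, if $\left|a - (n-1)/2\right| = \delta$, then $\binom{a}{2} + \binom{b}{2} \geq \frac{(n-1)(n-3)}{4} + \delta^2 - O(1)$. Summing over all vertices $v$ of $G_k$, writing $\delta_v = \left|d_{G_k}(v) - (n_k-1)/2\right|$, this gives
\[
\sum_{v \in V_k} \left[\binom{d_{G_k}(v)}{2} + \binom{d_{\overline{G_k}}(v)}{2}\right] \geq \frac{n_k(n_k-1)(n_k-3)}{4} + \sum_{v \in V_k} \delta_v^2 - O(n_k).
\]

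Next I would feed this into~\eqref{eq: 1} to obtain $D_0(G_k) + D_3(G_k) \geq \binom{n_k}{3}\left(\frac14 + o(1)\right) + \frac12 \sum_v \delta_v^2 - O(n_k)$, or equivalently $(p_0 + p_3)(G_k) \geq \frac14 - o(1) + \frac{\sum_v \delta_v^2}{2\binom{n_k}{3}}$. Since $\mathcal{G}$ is a Goodman sequence, $(p_0 + p_3)(G_k) \to 1/4$, so the error term forces $\sum_{v \in V_k} \delta_v^2 = o(n_k^3)$ as $k \to \infty$. Now if more than $\epsilon n_k$ vertices were $\epsilon$-exceptional, each such vertex $v$ would contribute $\delta_v^2 > \epsilon^2 n_k^2$ (up to the negligible difference between $n_k/2$ and $(n_k-1)/2$, which I would absorb by using $\epsilon/2$ in the intermediate estimates), giving $\sum_v \delta_v^2 > \epsilon^3 n_k^3$, contradicting the bound $\sum_v \delta_v^2 = o(n_k^3)$ for $k$ large enough. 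This yields the desired $k_0(\epsilon, \mathcal{G})$.

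There is no serious obstacle here; the proposition is essentially a stability version of Goodman's bound and the whole argument is a one-line convexity estimate plus bookkeeping. The only point requiring a little care is making the quadratic lower bound on $\binom{a}{2} + \binom{b}{2}$ in terms of the deviation fully explicit and uniform — in particular handling the parity of $n_k-1$ and keeping track that the additive $O(1)$ per vertex only costs $O(n_k)$ in total, which is swamped by the $\Theta(n_k^3)$ main terms. Everything else is routine.
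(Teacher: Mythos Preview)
Your proposal is correct and follows essentially the same approach as the paper: both arguments exploit the convexity of $\binom{a}{2}+\binom{b}{2}$ subject to $a+b=n-1$ to show that each $\epsilon$-exceptional vertex forces an excess of order $\epsilon^2 n^2$ in the Goodman sum, so that $\epsilon n$ such vertices push $(p_0+p_3)$ above $1/4$ by a fixed $\Theta(\epsilon^3)$. The only cosmetic difference is that you phrase the excess as $\delta_v^2$ summed over all vertices, whereas the paper writes the per-vertex contribution multiplicatively as $(1+4\epsilon^2)\bigl[\binom{n/2}{2}+\binom{n/2}{2}\bigr]+O(n)$; these are the same computation.
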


Note that, in particular, Goodman graphs are 2RL, thus ``Goodman'' can be considered an intermediate level between 2RL and 3RL. This was already pointed out by Chung, Graham and Wilson \cite{CGW} (``$P_1(3)\Rightarrow P_0 \Rightarrow P_1(2)$'' in their terminology; Corollary \ref{cor3rl} below states that $P_1(3)$ and 3RL are equivalent).   

\begin{proof}
Consider a graph $G$ of order $n$ in which at least $\epsilon n$ vertices are $\epsilon$-exceptional. Due to the convexity of binomial coefficients, each exceptional vertex $v$ contributes to the right hand side of \eqref{eq: 1} at least
\begin{eqnarray*}
\binom{d_{G}(v)}{2} + \binom{d_{\overline{G}}(v)}{2} 
&\geq& \binom{(1/2-\epsilon)n}{2} + \binom{(1/2+\epsilon)n}{2} + O(n) \\
&=& \left(1 + 4 \epsilon^2 \right) \left[\binom{n/2}{2} + \binom{n/2}{2} \right] + O(n) \,.
\end{eqnarray*}
If this happens $\epsilon n$ times, then $(D_0 + D_3)(G)$ exceeds its minimum possible value by at least $c \epsilon^3 n^3$ for some constant $c > 0$. Therefore $(p_0 + p_3)(G) > 1/4 + c' \epsilon^3 + o(1)$ for some absolute constant $c' > 0$. This can only happen finitely many times in a Goodman sequence. 
\end{proof}

Conversely, it is easy to see that every $\mathcal{G}$ satisfying the above is Goodman. In other words, Proposition \ref{propgood} gives an alternative characterisation of Goodman sequences. 



 
The next lemma and its corollary can be viewed as a strengthening of the $3$-universality result from \cite{Limo} (although, unlike Linial and Morgenstern, we do not optimise the error term $\varepsilon$). It provides additional information about the $3$-local profile of Goodman graphs, asserting that it is determined completely by $p_0$ (and, equivalently, by $p_3$).

\begin{lemma}\label{lemgood}
If $\mathcal{G}$ is Goodman then $(p_1-3p_3)(\mathcal{G})=(p_2-3p_0)(\mathcal{G})=0$.
\end{lemma}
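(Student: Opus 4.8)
I want to show that for a Goodman sequence, $(p_1 - 3p_3)(\mathcal{G}) = 0$ and $(p_2 - 3p_0)(\mathcal{G}) = 0$. Note that the second identity follows from the first by complementation: passing to $\overline{G}$ swaps $p_0 \leftrightarrow p_3$ and $p_1 \leftrightarrow p_2$, and $\overline{\mathcal{G}}$ is Goodman iff $\mathcal{G}$ is, so it suffices to prove $(p_1 - 3p_3)(\mathcal{G}) = 0$. The plan is to find an exact linear identity among $D_0, D_1, D_2, D_3$ (or the densities) that holds for \emph{every} graph and involves a vertex-degree sum that Goodman's bound pins down asymptotically.

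**The key identity.** I would count, over all vertices $v$, the number of edges inside the neighbourhood $N_G(v)$. On one hand, $\sum_{v} e(G[N_G(v)])$ counts each triangle three times (once per vertex of the triangle playing the role of $v$), so this sum equals $3 D_3(G)$. On the other hand, each edge $xy \in E(G)$ is counted once for every common neighbour $v$ of $x$ and $y$, i.e. $d_G(x,y)$ times; and a pair $x,y$ with $xy \in E$ together with a third vertex $w$ adjacent to both forms a triangle, while $xy \in E$ with $w$ adjacent to exactly one of them forms a path $P_3$ ($=K_{1,2}$, the $3$-vertex graph with one edge... wait, with two edges), and $xy \in E$ with $w$ adjacent to neither forms the single-edge graph. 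So I want to relate $\sum_{xy \in E} d_G(x,y)$ to $D_3$ and the other profile entries. Concretely, for a fixed edge $xy$, the $n - 2$ other vertices split into: those adjacent to both ($d(x,y)$ of them, each giving a triangle), those adjacent to exactly one, and those adjacent to neither. Summing $\sum_{xy\in E}(n-2) = e(G)(n-2)$, and sorting the triples by isomorphism type, one gets a clean relation. Actually the cleanest route: count pairs (edge $e$, vertex $v \notin e$) weighted by adjacency pattern; this directly expresses $\sum_{xy \in E} d_G(x,y) = 3D_3 + D_2$ (each triangle has $3$ edges each seeing one apex; each $P_3$ has... one vertex of degree $2$, hence one "cherry", contributing... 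I need to recount carefully), and similarly $\sum_{xy \in E} d_{\overline G}(x,y)$-type sums give $D_2, D_1$.

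**Closing the argument.** Once I have the exact identity, say of the form $3D_3(G) + aD_2(G) + \ldots = \sum_{v} \binom{d_G(v)}{2} + (\text{lower order})$ or more likely $\sum_{xy\in E}\binom{d(x,y)}{2}$-free linear version $\alpha D_1 + \beta D_2 + \gamma D_3 = \frac{1}{2}\sum_v d_G(v)(d_G(v)-1) + O(n^2)$, I divide by $\binom{n}{3}$ to pass to densities. The point is that in a Goodman sequence, Proposition~\ref{propgood} tells us all but $o(n)$ vertices are $\epsilon$-ordinary, so $\sum_v \binom{d_G(v)}{2} = (1+o(1)) n \binom{n/2}{2}$, i.e. its density contribution is pinned down exactly (this is precisely the case of equality in Goodman's convexity bound). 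Feeding this known value back into the linear identity, together with the trivial identity $p_0 + p_1 + p_2 + p_3 = 1$ and the already-known $p_0 + p_3 = 1/4$ (hence $p_1 + p_2 = 3/4$), gives two linear equations for the two unknowns $p_1 - 3p_3$ and $p_2 - 3p_0$, forcing both to be $0$. I expect the main obstacle to be purely bookkeeping: getting the combinatorial identity relating $\sum_v \binom{d_G(v)}{2}$ (or the edge–codegree sum) to $D_1, D_2, D_3$ exactly right, including which $3$-vertex graphs the "mixed" adjacency patterns produce and with what multiplicity — once that identity is correct and one observes that Goodman forces the degree-square sum to its extremal value up to $o(n^3)$, the rest is immediate linear algebra.
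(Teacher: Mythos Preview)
Your strategy is correct and is essentially the paper's: one double-count plus Goodman near-regularity plus linear algebra. The paper's clean identity is obtained by counting pairs $(v,e)$ with $e\in E(G)$ and $v\notin e$, giving
\[
3D_3(G)+2D_2(G)+D_1(G)=(n-2)e(G),
\]
and the same in $\overline{G}$; since Goodman forces $e(G)=e(\overline{G})+o(n^2)$, equating the two and using $p_0+p_1+p_2+p_3=1$, $p_0+p_3=1/4$ yields $3p_3+p_2=3p_0+p_1=3/4$, hence $p_1=3p_3$, $p_2=3p_0$. Your cherry-count variant $\sum_v \binom{d_G(v)}{2}=3D_3+D_2$ works just as well (Goodman pins the left side to $\tfrac34\binom{n}{3}+o(n^3)$), so this is a cosmetic difference. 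One bookkeeping slip to fix: $\sum_{xy\in E(G)} d_G(x,y)=3D_3$, not $3D_3+D_2$; the extra $D_2$ appears only when you sum over \emph{all} pairs with a common neighbour (i.e., the cherry count), not just over edges.
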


\begin{proof}
Counting vertex-edge pairs $(v,e)$ of $G \in \mathcal{G}$, where $v \notin e$ in two different ways, we obtain 
\begin{equation} \label{eq::doubleCount1}
3D_3(G)+2D_2(G)+D_1(G) = (n-2)e(G) \,.
\end{equation}
Similarly, counting such vertex-edge pairs in $\overline{G}$, we obtain 
\begin{equation} \label{eq::doubleCount2}
3D_0(G)+2D_1(G)+D_2(G) = 3D_3(\overline{G})+2D_2(\overline{G})+D_1(\overline{G}) = (n-2)e(\overline{G}) \,.
\end{equation}
Since, by Proposition \ref{propgood}, $|e(H) - e(\overline{H})| = o(|H|^2)$ holds for every Goodman graph $H$, we obtain asymptotic equality between the left hand sides of~\eqref{eq::doubleCount1} and~\eqref{eq::doubleCount2}. Passing to densities, this translates into
$$
3p_3+p_2=3p_0+p_1=\frac{1}{2}(3p_3+p_2+p_1+3p_0)=\frac{1}{2}\left[(p_0+p_1+p_2+p_3)+2(p_0+p_3)\right]=\frac{3}{4} \,,
$$
hence 
$$ 
p_1=\frac{3}{4}-3p_0=3p_3 \,.
$$
Similarly, $p_2=3p_0$.
\end {proof}

As an immediate consequence, we determine the $3$-local profile of 3RL graphs.

\begin{cor}\label{cor3rl}
If $\mathcal{G}$ is 3RL then $p_1(\mathcal{G})=p_2(\mathcal{G})=3/8$.
\end{cor}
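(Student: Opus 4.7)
The plan is to observe that this is essentially an immediate substitution into the preceding lemma, so the main task is simply to verify that the hypotheses line up.

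First I would note that, as already observed in the paragraph introducing Goodman sequences, every 3RL sequence is in particular Goodman: indeed, if $p_0(\mathcal{G}) = p_3(\mathcal{G}) = 1/8$ then $(p_0 + p_3)(\mathcal{G}) = 1/4$, which is exactly the defining condition for being Goodman. Therefore Lemma \ref{lemgood} applies to $\mathcal{G}$, yielding
\[
p_1(\mathcal{G}) = 3 p_3(\mathcal{G}) \quad\text{and}\quad p_2(\mathcal{G}) = 3 p_0(\mathcal{G}).
\]

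Next I would plug in the 3RL hypothesis $p_0(\mathcal{G}) = p_3(\mathcal{G}) = 1/8$ into these two identities, obtaining $p_1(\mathcal{G}) = 3/8$ and $p_2(\mathcal{G}) = 3/8$, as required. As a sanity check, one can verify consistency with the identity $p_0 + p_1 + p_2 + p_3 = 1$: indeed $1/8 + 3/8 + 3/8 + 1/8 = 1$.

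There is no real obstacle here: the entire content of the corollary is packaged inside Lemma \ref{lemgood}, and the corollary only has to make the substitution explicit. The one thing worth being slightly careful about is that Lemma \ref{lemgood} is formulated in terms of limits of densities along a sequence, so one should implicitly note that the limits $p_1(\mathcal{G})$ and $p_2(\mathcal{G})$ in fact exist — but this is automatic from the identities $p_1(\mathcal{G}) = 3 p_3(\mathcal{G})$ and $p_2(\mathcal{G}) = 3 p_0(\mathcal{G})$ together with the existence of the limits $p_0(\mathcal{G})$ and $p_3(\mathcal{G})$ granted by the 3RL assumption.
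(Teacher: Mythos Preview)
Your proposal is correct and is exactly the approach the paper takes: the corollary is stated as an immediate consequence of Lemma~\ref{lemgood}, and your write-up simply spells out the substitution $p_0=p_3=1/8$ into $p_1=3p_3$, $p_2=3p_0$.
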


In other words, the $3$-local profile of a 3RL graph mirrors that of the random graph $\mathcal{G}_{n,1/2}$, justifying our choice of terminology. 

The following construction from \cite{Limo} 
is known as the \emph{iterated blow-up} (see e.g. \cite{Victor}) and demonstrates that 3RL graphs need not be $5$-universal. Let $G_1 \cong C_5$ be a $5$-cycle. Given $G_k$, construct $G_{k+1}$ as follows. Take a $5$-\emph{blow-up} of $G_k$ (that is, replace every vertex $v$ of $G_k$ by $5$ new vertices $v_1,\dots, v_5$ and draw an edge between $u_i$ and $v_j$ if and only if there was an edge between $u$ and $v$), and add a $5$-cycle within each set $v_1,\dots,v_5$. Alternatively, $G_{k+1}$ can be constructed by taking a $5^k$-blow-up of $C_5$ and adding a copy of $G_k$ on each partition class. It is not hard to check that no $G_k$ contains an induced path on $5$ vertices. 
In order to see that $\mathcal{G}$ is 3RL one can either calculate the densities directly (as in~\cite{Limo}) or observe that $\mathcal{G}$ is a sequence of self-complementary $\left\lfloor n/2\right\rfloor$-regular graphs, which by Goodman's Theorem yields $p_0(\mathcal{G}) = p_3(\mathcal{G}) = 1/8$.

Note that this construction also shows that for every $\ell \geq 5$ and every $r \geq 6$ there exist arbitrarily large 3RL graphs which do not contain the path $P_{\ell}$ of length $\ell-1$ or the cycle $C_r$ as induced subgraphs. This is because any graph which contains an induced $P_\ell$ for some $\ell \geq 5$ or an induced $C_r$ for some $r \geq 6$ contains an induced $P_5$. The case of the $5$-cycle remains open.

\section{Proof of Theorem \ref{thm4univ}}\label{sec4univ}

We have to show that a sufficiently large 3RL graph $G$ contains each graph of order $4$ as an induced subgraph. Note that, in contrast to Corollary~\ref{cor3rl}, we cannot expect the density of $H$ in $G$ to be random-like for every graph $H$ on 4 vertices. Indeed, it is well-known (see e.g. Theorem 9.3.1 in~\cite{alon:probabilistic}) that such graphs are quasirandom and thus, in particular, $\ell$-universal for any fixed $\ell$. 

Since $G$ is 3RL if and only if $\overline{G}$ is, and the induced subgraphs of the latter are precisely the complements of induced subgraphs of the former, it suffices to split all $4$-vertex graphs into complementary pairs (the graph $P_4$, the path of length three, is self complementary) and prove containment for one graph $H$ from each pair. Thus we need only consider the following $6$ cases:

\begin{itemize}
\item $H=K_4$, the complete $4$-vertex graph
\item $H=K_4^-$, the complete graph with one edge missing
\item $H=C_4$, the $4$-cycle
\item $H=T^+$, a triangle with a pendant edge
\item $H=K_{1,3}$, the star (also known as the claw)
\item $H=P_4$, the path of length 3 
\end{itemize}

While the graphs above are listed in order of decreasing number of edges, we will consider them in a different order, starting from what we believe is the simplest case and finishing with the most difficult. In each of the cases the containment of $H$ is proved by contradiction, assuming initially that $\mathcal{G}$ is 3RL and $H$-free (remember that we are always looking for an \emph{induced} copy of $H$). 

\subsection*{Case 1: $H =T^+$}

It follows by Proposition \ref{propgood} that, for every $\epsilon > 0$ and sufficiently large $n$, if $G \in \mathcal{G}$ is a graph on $n$ vertices, then it contains at most $\epsilon n$ exceptional vertices. The set of all exceptional vertices of $G$ can intersect at most $\epsilon n^3$ triangles. Since $G$ is 3RL, it contains $(1/48+o(1))n^3$ triangles and so, for sufficiently large $n$, there must exist $\epsilon$-ordinary vertices $u$, $v$ and $w$ which form a triangle $T$ in $G$. 

Since $G$ is $T^+$-free, for any $x \in V(G) \setminus \{u,v,w\}$ we must have $d_G(x, T) \in \{0,2,3\}$. We partition the vertices of $V(G) \setminus \{u,v,w\}$ into two sets $X = \{x \in V(G) \setminus \{u,v,w\} : d_G(x, T) = 0\}$ and $Y = \{x \in V(G) \setminus \{u,v,w\} : d_G(x, T) \in \{2,3\}\}$. Since $u$, $v$ and $w$ are ordinary, on average, a vertex $x \in V(G) \setminus \{u,v,w\}$ will have $3/2 + o(1)$ neighbours in $T$. Therefore, we must have $n/4 - o(n) \leq |X| \leq n/2 + o(n)$ and $n/2 - o(n) \leq |Y| \leq 3n/4 + o(n)$. Let $x \in X$ and $y \in Y$ be arbitrary vertices. Assume without loss of generality that $\{u,v\} \subseteq N_G(y, T)$. We conclude that $x$ and $y$ are not adjacent in $G$ as otherwise  the vertices $x, y, u$ and $v$ would form an induced copy of $T^+$ in $G$. 

Since $x$ and $y$ were arbitrary, it follows that there are no edges of $G$ between $X$ and $Y$. Since $|X| \geq n/4 - o(n)$ and at most $\epsilon n$ vertices of $G$ are exceptional, there exists some ordinary $x \in X$. Because of $N_G(x) \subseteq X$, it follows that $|X| = n/2 + o(n)$. Finally, since all but at most $\epsilon n$ vertices of $X$ are ordinary and each of them has degree $n/2 + o(n)$ in $X$, we conclude that $e(G[X]) \geq \binom{n/2}{2} - o(n^2)$. 

A similar argument shows that $|Y| = n/2 + o(n)$ and that $e(G[Y]) \geq \binom{n/2}{2} - o(n^2)$. Counting anti-triangles in $G$, it follows that $D_0(G) = o(n^3)$ and thus $p_0(G) = 0$, contrary to our assumption that $G$ is 3RL.        

\subsection*{Case 2: $H = K_4^-$}

As in Case 1, consider a triangle $T=\left\{u,v,w\right\}$ where $u$, $v$ and $w$ are ordinary vertices. Since $G$ is $K_4^-$-free, for any $x \in V(G) \setminus \{u,v,w\}$ we must have $d_G(x, T) \in \{0,1,3\}$. We partition the vertices of $V(G)$ into two sets $X = \{x \in V(G) \setminus \{u,v,w\} : d_G(x, T) \in \{0,1\}\}$ and $Y = V(G) \setminus X$. Since $u$, $v$ and $w$ are ordinary, on average a vertex $x \in V(G) \setminus \{u,v,w\}$ will have $3/2 + o(1)$ neighbours in $T$. Therefore, we must have $n/2 - o(n) \leq |X| \leq 3n/4 + o(n)$ and $n/4 - o(n) \leq |Y| \leq n/2 + o(n)$. Considering $u$, $v$ and arbitrary $x, y \in Y \setminus \{u,v\}$, we deduce that $x$ and $y$ are adjacent in $G$, for otherwise $u$ ,$v$, $x$ and $y$ would form an induced copy of $K_4^-$ in $G$. It follows that $G[Y]$ is a clique. 

Let $z \in X$ be an arbitrary vertex and assume without loss of generality that $\{z, u\} \notin E(G)$. Let $y_1, y_2 \in Y \setminus \{u\}$ be arbitrary vertices. If $\{z, y_1\} \in E(G)$ and $\{z, y_2\} \in E(G)$, then $G[\{z, y_1, y_2, u\}]$ is an induced copy of $K_4^-$ in $G$. It follows that $d_G(x, Y) \leq 1$ for every $x \in X$. By Proposition~\ref{propgood} $G$ is essentially $n/2$-regular and thus we must have $n/2 - o(n) \leq |X|, |Y| \leq n/2 + o(n)$ and $e(G[X]) \geq \binom{n/2}{2} - o(n^2)$. Similarly to Case 1, it follows that $p_0(G) = 0$, contrary to our assumption that $G$ is 3RL.     

\subsection*{Case 3: $H = K_4$}

Let $\mathcal{G'}$ be any Goodman (not necessarily 3RL) sequence of $K_4$-free graphs. Observe that $G'$ being $K_4$-free is equivalent to $N_{G'}(v)$ being triangle-free for every $v \in V(G')$. Therefore, by Mantel's Theorem, $e(G'[N(v)]) \leq d(v)^2/4$ for every $v \in V(G')$. Since $\mathcal{G'}$ is Goodman, it follows by Proposition \ref{propgood} that the neighbourhoods of all but $o(n)$ vertices of $G'\in \mathcal{G'}$, span at most $(n/2+o(n))^2/4 = n^2/16+o(n^2)$ edges. Since $e(G'[N_{G'}(v)])$ is precisely the number of triangles of $G'$ that include $v$, a double counting of the edges in all neighbourhoods shows that 
$$
3D_3 = \sum_{v \in V(G')} e(G'[N(v)]) \leq \sum_{v \in V(G')} d(v)^2/4 = n \cdot \frac{n^2}{16} + o(n^3) =\left(\frac{3}{8} + o(1)\right) \binom{n}{3} \,.
$$
Hence, for any $K_4$-free Goodman sequence $\mathcal{G'}$ the value $p_3(\mathcal{G'})$, if it exists, is at most $1/8$, with equality attained only when $e(G'[N(v)]) = (1 - o(1)) d(v)^2/4$ holds for all but $o(n)$ vertices $v \in V(G')$. Conversely, $\mathcal{G}$ being 3RL implies that equality must be attained, whence we conclude that $e(G[N(v)]) = (1 - o(1)) d(v)^2/4$ holds for all but $o(n)$ vertices $v \in V(G)$.

Structural information on such `nearly extremal' graphs is provided by the Erd\H{o}s-Simonovits stability Theorem \cite{Simonovits} which, in this particular case and combined with the above, asserts that there exists a set $U \subseteq V(G)$ of order $(1 - o(1)) n$ such that for every $v \in U$ we have $d_G(v) = n/2 + o(n)$ and the neighbourhood $N(v)$ admits a bipartition into parts $N_1(v)$ and $N_2(v)$ such that $|N_1(v)|, |N_2(v)| = n/4 + o(n)$, there are $o(n^2)$ edges within each partition class and $(1-o(1)) n^2/16$ edges between the two classes. 

Let $v \in U$ be an arbitrary vertex. It follows by the above that there exists a vertex $u \in U \cap N_1(v)$ such that $d_G(u, N_2(v)) = n/4 + o(n)$ and $d_G(u, N_1(v)) = o(n)$ (recall that almost every vertex has these properties). Let $B = N_G(u) \setminus N_G(v)$; note that $|B| = n/4 + o(n)$. Since $u \in U$, its neighbourhood $N_G(u)$ must induce an essentially complete bipartite graph with both parts of order $n/4 + o(n)$. Since $N_2(u) := N_2(v) \cap N_G(u)$ is of order $n/4 + o(n)$ and contains $o(n^2)$ edges, up to $o(n)$ changes, the only way to achieve this is by taking the bipartition to be $N_G(u) = B \cup N_2(u)$. Let $w \in U \cap N_2(u)$ be a vertex such that $d_G(w, N_1(v)) = n/4 + o(n)$ and $d_G(w, B) = n/4 + o(n)$; by the above, almost every vertex of $U \cap N_2(u)$ has these properties. Since $w \in U$, its neighbourhood $N_G(w)$ must induce an essentially complete bipartite graph with both parts of order $n/4 + o(n)$. Up to $o(n)$ changes, the only way to achieve this is by taking the bipartition to be $N_G(w) = B \cup N_1(v)$. We conclude that the sets $N_1(v)$, $N_2(u)$ and $B$ are of size $n/4 + o(n)$ each and $G[N_1(v) \cup N_2(u) \cup B]$ is essentially an $n/2$-regular tripartite graph. 

Let $X = N_1(v) \cup N_2(u) \cup B$ and let $Y = U \setminus X$. On the one hand, $|Y| = n/4 + o(n)$ and the degree of every vertex in $Y$ is $n/2 + o(n)$ entailing that there are $\Omega(n^2)$ edges between $X$ and $Y$. On the other hand, all but $o(n)$ vertices of $X$ have degree $n/2 + o(n)$ in $G$ and in $G[X]$ entailing that there are $o(n^2)$ edges between $X$ and $Y$. This is clearly a contradiction.          

\subsection*{Case 4: $H = K_{1,3}$}
Let $\mathcal{G'}$ be a Goodman sequence of $K_{1,3}$-free graphs. Note that $G'$ being $K_{1,3}$-free is equivalent to $N_{G'}(v)$ being anti-triangle-free for every $v \in V(G')$, that is, the non-edges in $N_{G'}(v)$ must not form a triangle. Similarly to Case 3, it follows from Mantel's Theorem and Proposition \ref{propgood} that the neighbourhoods of all but $o(n)$ vertices of $G'$, span at most $(n/2+o(n))^2/4 = n^2/16+o(n^2)$ non-edges. Double counting of the non-edges in all neighbourhoods shows that   
$$
D_2 = \sum_{v \in V(G')} e(\overline{G'}[N_{G'}(v)]) = n \cdot \left(n^2/16 + o(n^2)\right) = \left(\frac{3}{8} + o(1)\right) \binom{n}{3} \,.
$$
Hence, for any Goodman sequence $\mathcal{G'}$, the value $p_2(\mathcal{G'})$, if it exists, is at most $3/8$, where by the Erd\H{o}s-Simonovits stability Theorem, equality is attained only when almost all neighbourhoods are close to being disjoint unions of two complete graphs of order $n/4$ each. Since $G$ is 3RL, it follows by Corollary~\ref{cor3rl} that this must indeed be the case. 

Let $U \subseteq V(G)$ be a set of order $(1 - o(1)) n$ such that for every $v \in U$ we have $d_G(v) = n/2 + o(n)$ and the neighbourhood $N(v)$ admits a bipartition into parts $N_1(v)$ and $N_2(v)$ such that $|N_1(v)|, |N_2(v)| = n/4 + o(n)$, there are $(1 - o(1)) n^2/32$ edges within each partition class and $o(n^2)$ edges between the two classes. 

Let $v \in U$ be an arbitrary vertex. It follows by the above that there exists a vertex $u \in A(u) := U \cap N_1(v)$ such that $|A(u)| = n/4 + o(n)$, $d_G(u, A(u)) = n/4 + o(n)$ and $d_G(u, N_2(v)) = o(n)$. Let $B = N_G(u) \setminus N_G(v)$; note that $|B| = n/4 + o(n)$. Since $u \in U$, its neighbourhood $N_G(u)$ must be close to a union of two complete graphs of order $n/4$ each. Since $G[A(u)]$ is essentially a complete graph on $n/4 + o(n)$ vertices, it follows that $G[B]$ is essentially a complete graph on $n/4 + o(n)$ vertices as well. Moreover, there are $o(n^2)$ edges of $G$ between $A(u)$ and $N_2(v) \cup B$.

Let $X = U \setminus (A(u) \cup N_2(v) \cup B)$; note that $|X| = n/4 + o(n)$. Since $d_G(w) = n/2 + o(n)$ holds for every $w \in A(u)$, it follows that $\{x,y\} \in E(G)$ for all but $o(n^2)$ pairs $(x,y) \in A(u) \times X$. Let $z \in A(u) \setminus \{u\}$ be an arbitrary vertex. Up to $o(n)$ vertices, its neighbourhood is $A(u) \cup X$ and so is far from being the disjoint union of two cliques of order $n/4$ each, contrary to the definition of $U$.         

\subsection*{Case 5: $H = P_4$}
For graphs with no induced $P_4$, also known as \emph{cographs}, we have the following structural characterisation due to Seinsche \cite{Seinsche}: if $G$ is induced $P_4$-free then either $G$ or $\overline{G}$ is disconnected (the other one is thereby forced to be connected). Let $W \subseteq V(G)$ be an arbitrary set of order at least 2. Clearly $G[W]$ is induced $P_4$-free and so, by the above characterisation, either $G[W]$ or $\overline{G}[W]$ is disconnected (note that it might be $G[W]$ for certain $W \subseteq V(G)$ and $\overline{G}[W]$ for others). 

Seinsche's characterisation allows us to construct a sequence $\mathcal{P}_0, \mathcal{P}_1, \dots$ of partitions of $V(G)$ as follows. $\mathcal{P}_0 = \left\{V(G)\right\}$ and, for every $i \geq 0$, $\mathcal{P}_{i+1}$ is obtained through partitioning each $W \in \mathcal{P}_i$ with $\left|W\right|\geq 2$ into the connected components of either $G[W]$ or $\overline{G}[W]$, depending which of the two is disconnected. For every $i \geq 0$, let $V_i$ denote a largest set in $\mathcal{P}_i$. For an arbitrarily small $\varepsilon > 0$ and sufficiently large $n$ let $j \geq 0$ denote the smallest index for which $|V_{j+1}| < (1 - \varepsilon) n$; clearly such an index $j$ must exist. Since $G$ is Goodman, it follows by Proposition~\ref{propgood} that at most $\epsilon n$ vertices of $G$ are $\epsilon$-exceptional. Since $|V_{j+1}| < (1 - \varepsilon) n$, the only way to ensure that there will not be too many exceptional vertices in $G$ is to split $V_j$ in $\mathcal{P}_{j+1}$ into two sets $W_1$ and $W_2$ of size $n/2 - 2 \epsilon n \leq |W_1|, |W_2| \leq n/2 + 2 \epsilon n$, and possibly some additional small sets. Indeed, otherwise every vertex of $V(G) \setminus V_{j+1}$ would be exceptional. Assume first that $G[V_j]$ is disconnected. Then there are at most $3\epsilon n^2$ pairs $\{x,y\} \subseteq W_1$ and at most $3\epsilon n^2$ pairs $\{x,y\} \subseteq W_2$ which are not adjacent in $G$. It follows that $D_0(G) \leq c \epsilon n^3$ for some absolute constant $c$ and thus $p_0(G) = 0$. Similarly, if $\overline{G}[V_j]$ is disconnected then $p_3(G) = 0$. This contradicts our assumption that $G$ is 3RL. 

\subsection*{Case 6: $H = C_4$}

Consider the expression 
$$
\sum_{\left\{u,v\right\}\in E(\overline{G})}\binom{d_G(u,v)}{2} \,.
$$ 
On the one hand, it counts $D_{K_4^-}(G)+2D_{C_4}(G)$, and since, by assumption, $D_{C_4}=0$, it must equal $D_{K_4^-}$. Now, since by Corollary \ref{cor3rl}

$$
\sum_{\left\{u,v\right\}\in E(\overline{G})}d_G(u,v)=D_2(G)=\frac{3}{8}\binom{n}{3}+o(n^3)=\frac{n^3}{16}+o(n^3) \,, 
$$

using the convexity of binomial coefficients we obtain


\begin{align}\label{eq: 2}
D_{K_4^-}&= \sum_{\left\{u,v\right\}\in E(\overline{G})}\binom{d_G(u,v)}{2} \geq e(\overline{G})\binom{\frac{1}{e(\overline{G})}\sum_{\left\{u,v\right\}\in E(\overline{G})}d_G(u,v)}{2}\\
&=\frac{n^2}{4}\binom{n/4}{2}+o(n^4)=\frac{n^4}{128}+o(n^4) \,. \notag
\end{align}

Thus 

\begin{equation}\label{eq: 2a}
p_{K_4^-}(G)\geq 3/16 \,.
\end{equation}

Now consider the expression 
$$
\sum_{\left\{u,v\right\}\in E(G)}d_G(u,-v)\cdot d_G(v,-u) \,.
$$
On the one hand, it counts $D_{P_4}+4D_{C_4}$, which by assumption equals $D_{P_4}$. On the other hand, since $G$ is Goodman, by Proposition \ref{propgood} we have $d(u)=d(v)+o(n)$ for all but at most $o(n^2)$ pairs of vertices, hence, $d(u,-v)=d(v,-u)+o(n)$ for almost all pairs. As a result, we obtain

$$
\sum_{\left\{u,v\right\}\in E(G)} d(u,-v)\cdot d(v,-u) = \frac{1}{4} \sum_{\left\{u,v\right\} \in E(G)} \left(d(u,-v) + d(v,-u)\right)^2 + o(n^4) \,.
$$

Now, since 
$$
\sum_{\left\{u,v\right\}\in E(G)}\left(d(u,-v)+d(v,-u)\right)=2D_2(G)=\frac{3}{4}\binom{n}{3}+o(n^3)=\frac{n^3}{8}+o(n^3) \,,
$$

the Cauchy-Schwarz inequality yields
\begin{align}\label{eq: 3}
D_{P_4} &= \frac{1}{4}\sum_{\left\{u,v\right\}\in E(G)}\left(d(u,-v)+d(v,-u)\right)^2+o(n^4) \\
&\geq \frac{1}{4} \cdot e(G)\left[\frac{1}{e(G)}\sum_{\left\{u,v\right\}\in E(G)}\left(d(u,-v)+d(v,-u)\right)\right]^2 +o(n^4) \notag \\
&= \frac{1}{4} \cdot \frac{n^2}{4} \cdot \left(\frac{n}{2}\right)^2+o(n^4) \notag \\ 
&= \frac{n^4}{64}+o(n^4) \,. \notag
\end{align}

Thus 

\begin{equation}\label{eq: 3a}
p_{P_4}(G) \geq 3/8 \,.
\end{equation}

Finally, double counting pairs of edges in $G$ not sharing a vertex, we obtain
$$
\frac{e(G)^2}{2}+o(n^4)=\binom{n^2/4}{2}+o(n^4)=D_{2K_2}+D_{P_4}+D_{T^+}+2D_{C_4}+2D_{K_4^-}+3D_{K_4} \,,
$$
where $2K_2$ is the complement of $C_4$. Thus 
$$
p_{2K_2}+p_{P_4}+p_{T^+}+2p_{K_4^-}+3p_{K_4} = \frac{3}{4} \,.
$$

Since from \eqref{eq: 2a} and \eqref{eq: 3a} we know that $2p_{K_4^-}(G)+p_{P_4}(G)\geq 3/4$, we deduce that $p_{T^+}(G)=p_{K_4}(G)=0$, $p_{K_4^-}(G)=3/16$, and $p_{P_4}(G) = 3/8$. The last two identities can only hold if in \eqref{eq: 2} and \eqref{eq: 3} we have equality up to $o(n^4)$. The former would imply that $d(u,v)$ must be close to its average $n/4 + o(n)$ for all but $o(n^2)$ pairs $\left\{u,v\right\}\in E(\overline{G})$. Similarly, an equality up to $o(n^4)$ in \eqref{eq: 3} implies that $d(u,-v)=d(v,-u)+o(n)=n/4+o(n)$ for almost every pair $\left\{u,v\right\}\in E(G)$, which, given Proposition \ref{propgood}, also means that $d(u,v)=n/4+o(n)$ for every such pair. In total, we obtain that all but $o(n^2)$ pairs of vertices $\left\{u,v\right\}$ have $n/4+o(n)$ joint neighbours, and therefore
\begin{equation} \label{eq::quasirandom}
\sum_{u,v \in V} \left|d(u,v)-\frac{n}{4}\right| = o(n^3) \,. 
\end{equation}

However, equation~\eqref{eq::quasirandom} is one of several equivalent definitions of a quasirandom graph (see e.g. Theorem 9.3.1 in~\cite{alon:probabilistic}) and thus all induced densities in $G$ are random-like. In particular, contrary to our assumption, $G$ cannot be induced $C_4$-free.

\medskip
\noindent
With contradiction obtained for each $4$-vertex graph $H$, the proof of Theorem \ref{thm4univ} is concluded.

\section{Large induced subgraphs}\label{seclarge}

Our aim in this section is to prove Theorem \ref{thmoverkill}. The main ingredient of our proof will be a construction of a $3RL$ sequence with no cliques of order at least 10.  

Given any graph $G$ of order $n$, we construct an $(n-1)$-regular graph $H = f(G)$ of order $2n$ as follows. Let $G_1 = (V_1, E_1)$ and $G_2 = (V_2, E_2)$ be two disjoint copies of $G = (V,E)$, where $V = \{u_1, \ldots, u_n\}$, $V_1 = \{v_1, \ldots, v_n\}$ and $V_2 = \{v'_1, \ldots, v'_n\}$. Set $V(H) = V_1 \cup V_2$ and $E(H) = E_1 \cup E_2 \cup F$, where $F = \left\{\{v_i, v'_j\} \colon 1 \leq i \neq j \leq n \textrm{ and } \left\{u_i, u_j \right\} \notin E \right\}$. That is, we take two identical copies of $G$ and connect two distinct vertices, one from each copy, by an edge of $H$ if and only if they are \emph{not} adjacent in $G$. 

The above construction is very similar to the \emph{tensor product} of $G$ and $K_2$; the sole difference is that we exclude the ``vertical'' edges, that is, edges between $v_i$ and $v'_i$. Tensor products of graphs were first defined by Thomason in~\cite{Thomason}. See Section~\ref{secdisc} for more details. 

Note that for any sequence $\mathcal{G} = (G_k)_{k=1}^\infty$, the corresponding sequence $f(\mathcal{G}) = (f(G_k))_{k=1}^\infty$ is automatically Goodman. The next question to ask is, under what conditions is $f(\mathcal{G})$ 3RL.

\begin{lemma} \label{lem::fis3RL}
$\mathcal{H} = f(\mathcal{G})$ is 3RL if and only if $(p_0 + p_2)(\mathcal{G}) = (p_1 + p_3)(\mathcal{G}) = 1/2$. 
\end{lemma}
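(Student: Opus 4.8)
The plan is to compute $p_0(\mathcal{H})$ and $p_3(\mathcal{H})$ explicitly as functions of the $3$-local profile of $\mathcal{G}$, and then check when both equal $1/8$. Since $\mathcal{H} = f(\mathcal{G})$ is automatically Goodman, it is $3$RL if and only if $p_0(\mathcal{H}) = p_3(\mathcal{H}) = 1/8$; moreover, since $\mathcal{H}$ is Goodman we already know $(p_0+p_3)(\mathcal{H}) = 1/4$, so it suffices to determine just one of the two densities, say $p_3(\mathcal{H})$, and impose $p_3(\mathcal{H}) = 1/8$. I would count triangles in $H = f(G)$ by partitioning the triples of $V(H) = V_1 \cup V_2$ according to how they split between the two copies: a triple entirely inside $V_1$ (or entirely inside $V_2$), a triple with two vertices in one copy and one in the other, and — there is no genuinely mixed case beyond that for a triple. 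The within-copy triples of $H$ induce exactly the subgraphs induced by the corresponding triples of $G$, contributing $2 D_3(G)$ triangles (and $2D_0(G)$ anti-triangles). So the whole story is in the "split $2+1$" triples.

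The key computation is thus: fix an ordered pair $(i,j)$ with $i \neq j$ and a third index $k \notin \{i,j\}$, and ask when $\{v_i, v_j, v'_k\}$ is a triangle in $H$. The edge $v_i v_j$ is present iff $u_i u_j \in E$; the edges $v_i v'_k$ and $v_j v'_k$ are present iff $u_i u_k \notin E$ and $u_j u_k \notin E$ respectively. So $\{v_i, v_j, v'_k\}$ is a triangle in $H$ precisely when, in $G$, $u_iu_j \in E$ but $u_iu_k, u_ju_k \notin E$ — that is, the triple $\{u_i,u_j,u_k\}$ induces a single edge in $G$ with $u_k$ the isolated vertex. Summing over all the $2 \cdot \binom{n}{3} \cdot 3 = 6\binom{n}{3}$ such configurations (choosing which copy contributes the pair, choosing the unordered triple of indices, and choosing which of the three indices plays the role of $k$), each unordered triple $\{u_i,u_j,u_k\}$ inducing exactly one edge in $G$ is counted exactly once per copy-choice, i.e. it contributes $2$ split-triangles to $H$; and no triple inducing $0$, $2$, or $3$ edges in $G$ contributes any. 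Hence the number of split triangles of $H$ equals $2 D_1(G)$. The same analysis applied to $\overline{H}$ (equivalently, counting anti-triangles directly: $\{v_i,v_j,v'_k\}$ is an anti-triangle of $H$ iff $u_iu_j \notin E$ and $u_iu_k, u_ju_k \in E$, i.e. the triple induces exactly two edges in $G$ with $u_k$ the apex) shows the number of split anti-triangles of $H$ equals $2 D_2(G)$. Therefore
\begin{align*}
D_3(H) &= 2 D_3(G) + 2 D_1(G), \\
D_0(H) &= 2 D_0(G) + 2 D_2(G).
\end{align*}

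Now I pass to densities. With $|H| = 2n$ we have $\binom{2n}{3} = 8\binom{n}{3} + O(n^2)$, while $D_i(G) = (p_i(G) + o(1))\binom{n}{3}$, so
\[
p_3(\mathcal{H}) = \frac{2(p_3(\mathcal{G}) + p_1(\mathcal{G}))}{8} = \frac{(p_1+p_3)(\mathcal{G})}{4}, \qquad p_0(\mathcal{H}) = \frac{(p_0+p_2)(\mathcal{G})}{4},
\]
wherever the relevant limits exist. Consequently $p_3(\mathcal{H}) = p_0(\mathcal{H}) = 1/8$ if and only if $(p_1+p_3)(\mathcal{G}) = (p_0+p_2)(\mathcal{G}) = 1/2$; and since these two sums add to $1$, either condition implies the other, matching the statement. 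One should note a small subtlety to address in the write-up: the two sums $(p_0+p_2)(\mathcal{G})$ and $(p_1+p_3)(\mathcal{G})$ may exist even when the individual limits $p_i(\mathcal{G})$ do not, so the argument should be phrased in terms of $D_0(G)+D_2(G)$ and $D_1(G)+D_3(G)$ (or of liminf/limsup), exactly as Goodman sequences were handled earlier. The main obstacle is simply getting the triangle/anti-triangle bookkeeping for the split triples right — in particular being careful that the "which copy supplies the pair" choice contributes the factor $2$ and that the edge pattern $u_iu_j \in E$, $u_iu_k, u_ju_k \notin E$ is identified with the correct $3$-vertex graph (a single edge, $D_1$) rather than its complement. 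Everything after that is a routine passage to densities via $\binom{2n}{3} \sim 8\binom{n}{3}$.
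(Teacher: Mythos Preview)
Your proof is correct and follows essentially the same approach as the paper: partition the triples of $V(H)$ according to how many vertices lie in $V_1$, identify the split triangles with the $1$-edge triples of $G$ (and split anti-triangles with the $2$-edge triples), obtain $D_3(H)=2(D_1(G)+D_3(G))$ and $D_0(H)=2(D_0(G)+D_2(G))$, and pass to densities via $\binom{2n}{3}\sim 8\binom{n}{3}$. The paper's write-up is terser and phrases the symmetry as $T_0+T_1+T_2+T_3=2(T_0+T_1)$, but the content is the same.
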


\begin{proof}
For every $0 \leq i \leq 3$, let $T_i$ denote the number of triangles of $H$ with exactly $i$ vertices in $V_1$. It is evident that $D_3(H) = T_0 + T_1 + T_2 + T_3 = 2(T_0 + T_1)$. Clearly $T_0 = D_3(G)$. Moreover, every triangle with exactly one vertex in $V_1$ corresponds to three vertices which induce precisely one edge in $G$ and thus $T_1 = D_1(G)$. It follows that $D_3(H) = 2(D_3(G) + D_1(G))$ and thus     
$$
p_3(H) = \frac{1}{4} \left[p_1(G) + p_3(G)\right] + o(1) \,.
$$
We conclude that $p_3(H) = 1/8$ if and only if $(p_1 + p_3)(G) = 1/2$. An analogous argument shows that $p_0(H) = 1/8$ if and only if $(p_0 + p_2)(G) = 1/2$. 
\end{proof}

Given Lemma~\ref{lem::fis3RL}, our aim is to construct a sequence $\mathcal{G}$ with $p_1(\mathcal{G}) + p_3(\mathcal{G}) = 1/2$ such that $f(\mathcal{G})$ does not contain a clique of some fixed size. Given a positive integer $k$ and a real number $r \geq 2$ that might depend on $k$, let $G_k^r = (V_k, E_k)$, where $V_k = \{0, 1, \ldots, k-1\}$ and $E_k = \{\{i,j\} \colon i - j \mod k > k/r  \textrm{ and } j - i \mod k > k/r\}$; let $\mathcal{G} = (G_k^r)_{k=1}^\infty$. Since any $\lceil r \rceil$ vertices of $V_k$ must contain two whose distance in the cyclic group $C_k$ is at most $k/r$, the largest clique of $G_k^r$ is of order at most $\left \lceil r \right \rceil - 1$. We claim that a similar statement holds in $H$.  

\begin{claim}
For every $r \geq 5$ and sufficiently large $k$, the largest clique in $H = f(G_k^r)$ is of order at most $\left \lceil r \right \rceil - 1$.
\end{claim}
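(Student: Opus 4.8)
The plan is to show that a large clique in $H = f(G_k^r)$ would force, back in $G_k^r$, a configuration that is impossible on the cyclic group $C_k$ once $r \geq 5$. A clique $Q$ in $H$ splits as $Q = Q_1 \cup Q_2$ with $Q_1 \subseteq V_1$ and $Q_2 \subseteq V_2$; write $A = \{i : v_i \in Q_1\}$ and $B = \{i : v'_i \in Q_2\}$ for the corresponding index sets in $V_k = C_k$. The clique condition on $H$ translates into three conditions: (i) $A$ spans a clique in $G_k^r$, i.e. any two elements of $A$ are at cyclic distance $> k/r$; (ii) $B$ likewise spans a clique in $G_k^r$; and (iii) for every $i \in A$ and $j \in B$ with $i \neq j$ we have $\{v_i, v'_j\} \in F$, i.e. $\{u_i, u_j\} \notin E_k$, which by the definition of $E_k$ means $i$ and $j$ are at cyclic distance $\leq k/r$ (this includes the possibility $i = j$). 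The first two conditions already give $|A|, |B| \leq \lceil r \rceil - 1$, so the only way $|Q|$ could exceed $\lceil r \rceil - 1$ is if $A$ and $B$ are both nonempty and ``genuinely interleaved''.

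First I would handle the degenerate cases: if $A = \emptyset$ or $B = \emptyset$ then $|Q| \leq \lceil r\rceil - 1$ by (i) or (ii) and we are done. So assume both are nonempty and pick any $i_0 \in A$. Condition (iii) says every element of $B \setminus \{i_0\}$ lies within cyclic distance $k/r$ of $i_0$, so $B$ is contained in an arc $I$ of length $\leq 2k/r$ centred at $i_0$ (together with $i_0$ itself). By the same reasoning with the roles reversed, $A$ is contained in an arc of length $\leq 2k/r$ centred at any fixed $j_0 \in B$. The next step is to bound how many pairwise-far points of $C_k$ can fit into an arc of length $\leq 2k/r$: two such points must be at distance $> k/r$, so an arc of length $2k/r$ contains at most $\lfloor 2k/r \big/ (k/r) \rfloor + 1 = 3$ such points — more carefully, at most $2$ gaps of size $> k/r$ fit in a window of length $2k/r$ once one accounts for strict inequalities and $k$ large, giving $|A| \leq 3$ and $|B| \leq 3$, hence $|Q| \leq 6 \leq \lceil r \rceil - 1$ when $r \geq 7$. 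To get all the way down to $r \geq 5$ one must be a little sharper: if $|A| \geq 2$, pick $i_1, i_2 \in A$ at distance $> k/r$; then $B \setminus \{i_1\}$ lies within $k/r$ of $i_1$ \emph{and} $B \setminus \{i_2\}$ lies within $k/r$ of $i_2$, and since the two arcs of radius $k/r$ about $i_1$ and $i_2$ overlap in a region of length $< k/r$ (as their centres are $> k/r$ apart and, crucially, $< (1 - 1/r)k \leq 4k/5 \cdot$ something — here one uses $r \geq 5$ so that $2 \cdot k/r \leq 2k/5 < k/2$, keeping the two radius-$k/r$ balls from wrapping around and covering all of $C_k$), one shows $B$ is confined to a short arc and then a direct packing count on that short arc, combined with the symmetric statement for $A$, yields $|A| + |B| \leq \lceil r \rceil - 1$.

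The main obstacle I anticipate is precisely this last quantitative packing step: getting the constant exactly right so that the bound is $\lceil r \rceil - 1$ rather than something slightly weaker like $\lceil r \rceil$ or $2\lceil r\rceil$, and making sure the cyclic (wrap-around) geometry is handled correctly — in particular verifying that the relevant arcs of radius $k/r$ do not overlap ``the other way around'' $C_k$, which is exactly where the hypothesis $r \geq 5$ (rather than merely $r \geq 2$) is used, since $4k/r < k$ must hold with room to spare. The ``sufficiently large $k$'' clause absorbs the rounding of $k/r$ to an integer and the strict-versus-non-strict inequality issues in the distance conditions, so those can be dispatched with $o(k)$-type slack rather than exact arithmetic.
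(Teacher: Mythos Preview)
Your framework is correct and essentially matches the paper's: split the clique as $Q_1 \cup Q_2$, translate into index sets $A,B \subseteq C_k$ with the three conditions (i)--(iii), and use the geometry of~$C_k$. Two small corrections, however, turn your unfinished detour into an immediate finish.

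First, $A \cap B = \varnothing$: by construction $v_i$ and $v'_i$ are \emph{never} adjacent in $H$, so they cannot both lie in a clique. Your ``together with $i_0$ itself'' caveat is therefore vacuous, and $B$ sits entirely inside the arc of radius $k/r$ about $i_0$.

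Second, your packing count is off by one. In an arc of length $2k/r$, points that are pairwise at cyclic distance \emph{strictly} greater than $k/r$ satisfy $(m-1)\cdot (k/r) < 2k/r$, hence $m \leq 2$, not $3$. (The ``sufficiently large $k$'' hypothesis lets you ignore the integer rounding of $k/r$.) Thus whenever $A \neq \varnothing$ you get $|B| \leq 2$, and symmetrically $|A| \leq 2$, so $|Q| \leq 4 \leq \lceil r \rceil - 1$ for all $r \geq 5$. Your ``sharper'' two-centre argument for $r \in \{5,6\}$ is not needed at all.

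The paper's proof is exactly this argument phrased contrapositively: assuming a clique of size $\lceil r \rceil$ in $H$, the larger of the two sides has at least $\lceil r/2 \rceil \geq 3$ vertices (this is precisely where $r \geq 5$ enters); picking any single vertex $u$ on the other side, one finds three points of $C_k$ pairwise at distance $> k/r$ yet all within distance $\leq k/r$ of the index of $u$ --- impossible by the same two-points-in-a-short-arc observation. So there is no genuine difference in method, only in bookkeeping; once you fix the strict-inequality count, your version and the paper's coincide.
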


\begin{proof}
For the sake of clarity of presentation let us assume that $r$ is an integer. Suppose for a contradiction that $\phi$ is an embedding of $K_r$ in $H$. Let $J$ denote the resulting copy; let $U_1 = V_1 \cap V(J)$ and $U_2 = V_2 \cap V(J)$. Since, as observed above, $G_k^r$ does not contain $K_r$ as a subgraph, it follows that $U_1 \neq \emptyset$ and $U_2 \neq \emptyset$. Let $t \geq s \geq 1$ be integers such that $s = |U_1|$ and $t = |U_2|$; note that $t \geq \left\lceil r/2\right\rceil \geq 3$. Since every $u \in U_1$ and every $v \in U_2$ are joined by an edge of $H$ and yet $\{v_i, v'_i\} \notin E(H)$ for every $1 \leq i \leq n$, it follows that $\overline{G_k^r}$ contains the complete bipartite graph $K_{s,t}$ as an induced subgraph. Let $u \in U_1$ and let $w_1$, $w_2$ and $w_3$ be three of its neighbours in $U_2$. These four vertices correspond to four vertices $i$ and $j_1 < j_2 < j_3$ of $V_k$ such that $j_1, j_2$ and $j_3$ are pairwise far in $C_k$ but $i$ is close to all of them. This is clearly impossible. 
\end{proof}       

It remains to find a value of $r$ for which $(p_1 + p_3)f(\mathcal{G}) = 1/2$, where $\mathcal{G} = (G_k^r)_{k=1}^\infty$. Observe that for $r > k$ the graph $G_k^r$ is a complete graph, whereas for $r = 2$ the graph $G_k^r$ is empty. Hence there must exist a real number $r$ for which $(p_1 + p_3)(G_k^r) = 1/2+o(1)$. A straightforward calculation shows that $p_3(G_k^r) = \left(\frac{r-3}{r}\right)^2 + o(1)$ and $p_1(G_k^r)= \frac{3}{r^2} + o(1)$, whence the desired value of $r$ is achieved at $2\sqrt{3} + 6 \approx 9.46$. For this value of $r$ the sequence $\mathcal{H} = f(\mathcal G)$ is 3RL but does not contain a clique of order $10$.

\medskip

Theorem~\ref{thmoverkill} is a simple corollary of the aforementioned result for cliques of order 10. Before showing this, let us remark that, for every $r \geq 10$, there exists a 3RL graph with no induced copy of $K_{1,r}$. This is simply because $K_r$ is an induced subgraph of $\overline{K_{1,r}}$, so $\overline{\mathcal{H}}$, the sequence of complements of the graphs $f(G_k^r)$ we have just constructed, does not contain any star of order $11$ or greater as an induced subgraph. Now let $J$ be any graph of order $n \geq R(10,10)$. By Ramsey's Theorem $J$ must contain $K_{10}$ or $\overline{K_{10}}$ as a subgraph. In the former case $\mathcal{H}$ is 3RL but without $J$ as an induced subgraph and in the latter case $\overline{\mathcal{H}}$ is 3RL but without $J$ as an induced subgraph. This concludes the proof of Theorem~\ref{thmoverkill}.

\section{A construction of high universality}\label{sechigh}

In this section we prove Theorem \ref{thmarbitrary}, to which end we shall use the following construction. 

Given vertex disjoint graphs $G_1 = (V_1, E_1)$ and $G_2 = (V_2, E_2)$, where $\left|V_1\right|=\left|V_2\right|$, we construct a graph $H = G_1 \oplus G_2$ by joining $G_1$ and $G_2$ via a random bipartite graph. Formally, $V(H) = V_1 \cup V_2$ and $E(H) = E_1 \cup E_2 \cup E_3$, where $E_3$ is formed by joining independently at random each pair $(x,y) \in V_1 \times V_2$ with probability $1/2$. 



The special case of this construction in which $G_1 = K_{n,n}$ and $G_2 = \overline{K_{n,n}}$ was used by Chung, Graham and Wilson \cite{CGW} in order to demonstrate that a graph $H$ that behaves random-like with respect to all $3$-vertex subgraphs, that is, when $(p_0(H), p_1(H), p_2(H), p_3(H)) = (1/8, 3/8, 3/8, 1/8)$, need not be quasirandom. Note that, due to Corollary \ref{cor3rl}, being random-like with respect to all $3$-vertex subgraphs is equivalent to being 3RL. The following two lemmas provide more information on the $3$-local profile of $G_1 \oplus G_2$. 

\begin{lemma}\label{lemplus1}
$H = G_1 \oplus G_2$ is a.a.s. Goodman if and only if $G_1$ and $G_2$ are Goodman.
\end{lemma}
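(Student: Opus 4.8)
The plan is to express the triangle-plus-anti-triangle count of $H = G_1 \oplus G_2$ in terms of the corresponding quantities for $G_1$ and $G_2$ together with a random contribution coming from the bipartite ``noise'' $E_3$, and then invoke the Goodman characterisation via near-regularity (Proposition~\ref{propgood}). Recall that $H$ is Goodman if and only if all but $o(|V(H)|)$ of its vertices have degree $(1/2 + o(1))|V(H)|$. So first I would compute degrees: for a vertex $x \in V_1$, we have $d_H(x) = d_{G_1}(x) + d_{E_3}(x, V_2)$, where $d_{E_3}(x, V_2)$ is a $\mathrm{Bin}(n, 1/2)$ random variable, $n = |V_1| = |V_2|$. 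By a Chernoff bound and a union bound over the $2n$ vertices, a.a.s. every vertex $x \in V_1$ satisfies $d_{E_3}(x, V_2) = n/2 + o(n)$, and symmetrically for $V_2$. Hence a.a.s.\ $d_H(x) = d_{G_1}(x) + n/2 + o(n)$ for all $x \in V_1$ and $d_H(y) = d_{G_2}(y) + n/2 + o(n)$ for all $y \in V_2$.

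From here both directions follow from Proposition~\ref{propgood}. If $G_1$ and $G_2$ are Goodman, then all but $o(n)$ vertices of $G_i$ have degree $n/2 + o(n)$ in $G_i$, so a.a.s.\ all but $o(n)$ vertices of $H$ in each side have degree $n/2 + n/2 + o(n) = n + o(n) = |V(H)|/2 + o(|V(H)|)$; by the converse part of Proposition~\ref{propgood} (the alternative characterisation of Goodman sequences remarked on just after its proof), $H$ is a.a.s.\ Goodman. Conversely, if $H$ is a.a.s.\ Goodman, then a.a.s.\ all but $o(n)$ of its $2n$ vertices have degree $n + o(n)$; combined with $d_H(x) = d_{G_1}(x) + n/2 + o(n)$ for every $x \in V_1$ (which holds a.a.s.), this forces all but $o(n)$ vertices of $G_1$ to have degree $n/2 + o(n)$, i.e.\ $G_1$ is Goodman, and likewise for $G_2$.

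The one technical point to be careful about is the quantifier order in ``a.a.s.'': the statement concerns sequences $\mathcal{G}_1 = (G_{1,k})$, $\mathcal{G}_2 = (G_{2,k})$ with $|V_{1,k}| = |V_{2,k}| = n_k \to \infty$, and $\mathcal{H} = (H_k)$ where each $H_k$ is a random graph. ``$\mathcal{H}$ is a.a.s.\ Goodman'' should be read as: with probability tending to $1$, the sequence $(H_k)$ is Goodman, or more precisely that the degree-regularity condition of Proposition~\ref{propgood} holds for all large $k$ with probability $\to 1$. Since the Chernoff estimate gives failure probability $e^{-\Omega(n_k)}$ per vertex and hence $2 n_k e^{-\Omega(n_k)} = o(1)$ per graph, and these are summable along the sequence (or one applies Borel--Cantelli), the exceptional event can be absorbed; this is the sort of routine bookkeeping I would state once and not belabour. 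I do not expect any real obstacle here: the heart of the matter is simply that adding a random $1/2$-density bipartite graph shifts every degree by $n/2 + o(n)$ almost surely, so it preserves and reflects near-$|V(H)|/2$-regularity, which by Goodman's theorem is exactly the Goodman property. The only mild subtlety worth flagging is making sure the error term $o(n)$ in the degrees is genuinely uniform over all vertices (not just on average), which is why a union bound over the Chernoff estimates — rather than a first-moment argument — is used.
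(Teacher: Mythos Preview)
Your argument is correct, but it takes a different route from the paper's. The paper computes $p_3(H)$ and $p_0(H)$ directly, by conditioning on how many of the three vertices of a random triple lie in $V_1$; this yields the explicit formulas
\[
p_3(H) = \tfrac{1}{8}\bigl(p_3(G_1)+p_3(G_2)\bigr)+\tfrac{3}{32}\bigl(p_e(G_1)+p_e(G_2)\bigr)
\]
and the analogous one for $p_0(H)$. Adding them gives $p_0(H)+p_3(H)=\tfrac{1}{8}\bigl[(p_0+p_3)(G_1)+(p_0+p_3)(G_2)\bigr]+\tfrac{3}{16}$, and since each $(p_0+p_3)(G_i)\ge 1/4$ by Goodman's bound, the sum equals $1/4$ iff both $G_i$ are Goodman. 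You instead bypass the triangle count entirely and use the degree characterisation of Goodman sequences (Proposition~\ref{propgood} and its converse): a Chernoff/union bound makes the random bipartite part contribute $n/2+o(n)$ to every degree a.a.s., so near-$n$-regularity of $H$ is equivalent to near-$n/2$-regularity of each $G_i$. Your approach is a little cleaner for this lemma in isolation; the paper's approach has the advantage that the explicit formulas for $p_0(H)$ and $p_3(H)$ are exactly what is needed in the very next lemma (Lemma~\ref{lemplus}), so the computation is not wasted.
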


\begin{proof}
For every $0 \leq i \leq 3$ and sufficiently large $n$, the probabilities of a random vertex-triple of $V(H)$ having exactly $i$ vertices in $G_1$ are roughly binomially distributed. It thus follows by the definition of $G_1 \oplus G_2$ and by standard bounds on the tails of the binomial distribution that a.a.s.
\begin{equation}\label{eq: 6}
p_3(H) = \frac{1}{8}\left(p_3(G_1)+p_3(G_2)\right)+\frac{3}{8}\cdot \left(\frac{1}{2}\right)^2\cdot\left(p_e(G_1)+p_e(G_2)\right) \,,
\end{equation}
and similarly a.a.s. 

\begin{equation}\label{eq: 7}
p_0(H) = \frac{1}{8}\left(p_0(G_1)+p_0(G_2)\right)+\frac{3}{8}\cdot \left(\frac{1}{2}\right)^2\cdot\left(p_e(\overline{G_1})+p_e(\overline{G_2})\right) \,.
\end{equation}

Since $p_e(G)+p_e(\overline{G})=1$, adding the above equations we obtain 

$$
p_0(H)+p_3(H)=\frac{1}{8}\left[(p_0(G_1)+p_3(G_1))+(p_0(G_2)+p_3(G_2))\right]+\frac{3}{16} \,,
$$
which equals $1/4$ if and only if 

\begin{equation}\label{eq: 8}
\left(p_0(G_1)+p_3(G_1)\right)+\left(p_0(G_2)+p_3(G_2)\right)=\frac{1}{2} \,.
\end{equation}

By Goodman's bound,~\eqref{eq: 8} can only hold when $p_0(G_1)+p_3(G_1)=p_0(G_2)+p_3(G_2)=\frac{1}{4}$, that is, when both $G_1$ and $G_2$ are Goodman.
\end{proof}

\begin{lemma}\label{lemplus}
$H=G_1\oplus G_2$ is a.a.s. 3RL if and only if $G_1$ and $G_2$ are Goodman and $p_i(G_1)=p_{3-i}(G_2)$ for all $0\leq i \leq 3$.  
\end{lemma}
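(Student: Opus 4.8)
\textbf{Proof proposal for Lemma~\ref{lemplus}.}

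The plan is to mimic the proof of Lemma~\ref{lemplus1}, but to extract the finer information carried by the individual densities $p_0(H)$ and $p_3(H)$ rather than just their sum. By Lemma~\ref{lemplus1}, $H = G_1 \oplus G_2$ is a.a.s. Goodman if and only if $G_1$ and $G_2$ are Goodman, and of course $\mathcal{H}$ being 3RL entails $\mathcal{H}$ being Goodman; so we may assume throughout that $G_1$ and $G_2$ are Goodman and work to pin down when, additionally, $p_0(H) = p_3(H) = 1/8$ holds a.a.s.

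First I would invoke equations~\eqref{eq: 6} and~\eqref{eq: 7}, which hold a.a.s. and express $p_3(H)$ and $p_0(H)$ in terms of the local profiles and edge densities of $G_1$ and $G_2$. Setting $p_3(H) = 1/8$ in~\eqref{eq: 6} gives
\begin{equation*}
\left(p_3(G_1) + p_3(G_2)\right) + \frac{3}{4}\left(p_e(G_1) + p_e(G_2)\right) = 1 \,,
\end{equation*}
and setting $p_0(H) = 1/8$ in~\eqref{eq: 7} gives the complementary identity
\begin{equation*}
\left(p_0(G_1) + p_0(G_2)\right) + \frac{3}{4}\left(p_e(\overline{G_1}) + p_e(\overline{G_2})\right) = 1 \,.
\end{equation*}
Since $G_1$ and $G_2$ are Goodman they are in particular 2RL, so $p_e(G_i) = 1/2 + o(1)$ for $i = 1, 2$; substituting this into the first displayed identity yields $p_3(G_1) + p_3(G_2) = 1/4$, and into the second yields $p_0(G_1) + p_0(G_2) = 1/4$. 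Conversely, if $G_1, G_2$ are Goodman with $p_3(G_1) + p_3(G_2) = p_0(G_1) + p_0(G_2) = 1/4$, then plugging back into~\eqref{eq: 6} and~\eqref{eq: 7} (again using $p_e(G_i) = 1/2 + o(1)$) gives $p_3(H) = p_0(H) = 1/8$ a.a.s., i.e. $\mathcal{H}$ is 3RL. So the condition ``$\mathcal{H}$ a.a.s. 3RL'' is equivalent to ``$G_1, G_2$ Goodman and $p_0(G_1) + p_0(G_2) = p_3(G_1) + p_3(G_2) = 1/4$''.

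It remains to see that this last condition is equivalent to the statement of the lemma, namely $p_i(G_1) = p_{3-i}(G_2)$ for all $0 \leq i \leq 3$. One direction is immediate: if $p_i(G_1) = p_{3-i}(G_2)$ for all $i$, then $p_0(G_1) + p_0(G_2) = p_0(G_1) + p_3(G_1) = 1/4$ since $G_1$ is Goodman, and likewise $p_3(G_1) + p_3(G_2) = p_3(G_1) + p_0(G_1) = 1/4$. For the other direction, note that since $G_1$ is Goodman we have $p_0(G_1) + p_3(G_1) = 1/4$, so $p_3(G_2) = 1/4 - p_3(G_1) = p_0(G_1)$, and symmetrically $p_0(G_2) = 1/4 - p_0(G_1) = p_3(G_1)$; this gives the cases $i = 0$ and $i = 3$. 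The cases $i = 1, 2$ then follow from Lemma~\ref{lemgood} applied to the Goodman graphs $G_1$ and $G_2$: that lemma gives $p_1(G_j) = 3p_3(G_j)$ and $p_2(G_j) = 3p_0(G_j)$ for $j = 1, 2$, whence $p_1(G_1) = 3p_3(G_1) = 3p_0(G_2) = p_2(G_2)$ and similarly $p_2(G_1) = 3p_0(G_1) = 3p_3(G_2) = p_1(G_2)$. This completes the equivalence.

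The only real subtlety is bookkeeping: one must be careful that the $o(1)$ error terms coming from~\eqref{eq: 6},~\eqref{eq: 7}, and from $p_e(G_i) = 1/2 + o(1)$ are harmless, which they are since everything here is a statement about limits of densities along the sequences. I do not anticipate a genuine obstacle beyond this, since Lemma~\ref{lemplus1} has already done the probabilistic heavy lifting (the concentration of the binomial tails) and Lemma~\ref{lemgood} supplies the identities $p_1 = 3p_3$, $p_2 = 3p_0$ that convert control of $p_0, p_3$ into control of the whole $3$-local profile.
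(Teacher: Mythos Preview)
Your proposal is correct and follows essentially the same approach as the paper: invoke Lemma~\ref{lemplus1} to reduce to the Goodman case, use equations~\eqref{eq: 6} and~\eqref{eq: 7} together with $p_e(G_i)=1/2$ to obtain $p_0(G_1)+p_0(G_2)=p_3(G_1)+p_3(G_2)=1/4$, then combine with $p_0(G_i)+p_3(G_i)=1/4$ and Lemma~\ref{lemgood} to recover the full profile identities $p_i(G_1)=p_{3-i}(G_2)$. The only difference is cosmetic: you isolate the intermediate condition ``$p_0(G_1)+p_0(G_2)=p_3(G_1)+p_3(G_2)=1/4$'' and prove equivalence with the lemma's statement separately, whereas the paper does the two directions of the lemma directly.
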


\begin{proof}
If $G_1$ and $G_2$ satisfy the conditions of the lemma, then $p_e(G_1)=p_e(G_2)=1/2$ and $p_3(G_1)+p_3(G_2)=p_3(G_1)+p_0(G_1)=1/4$. It follows from \eqref{eq: 6} that $p_3(H)=1/8$.
Similarly, the conditions of the lemma and \eqref{eq: 7} yield $p_0(H)=1/8$, whence we conclude that $H$ is 3RL. 

Conversely, if $H$ is 3RL, it is Goodman, so by Lemma \ref{lemplus1} $G_1$ and $G_2$ must be Goodman as well, in which case $p_e(G_1)=p_e(G_2)=1/2$, and the identities \eqref{eq: 6} and \eqref{eq: 7} transform into 

$$
\frac{1}{8}=p_3(H)=\frac{1}{8}\left(p_3(G_1)+p_3(G_2)\right)+\frac{3}{32}
$$

and 

$$
\frac{1}{8}=p_0(H)=\frac{1}{8}\left(p_0(G_1)+p_0(G_2)\right)+\frac{3}{32} \,.
$$

It follows that 
$$
p_0(G_1)+p_0(G_2)=p_3(G_1)+p_3(G_2)=\frac{1}{4}=p_0(G_1)+p_3(G_1)=p_0(G_2)+p_3(G_2).
$$
Thus $p_0(G_1)=p_3(G_2)$ and $p_3(G_1)=p_0(G_2)$. By Lemma \ref{lemgood} we also have $p_1(G_1)=p_2(G_2)$ and $p_2(G_1)=p_1(G_2)$.
\end{proof}

%
%
%

Since two 3RL graphs satisfy the conditions of Lemma \ref{lemplus}, we obtain the following useful fact as an immediate corollary.

\begin{cor}\label{corplus}
If $G_1$ and $G_2$ are 3RL then $G_1\oplus G_2$ is a.a.s. 3RL.
\end{cor}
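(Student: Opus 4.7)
The plan is to invoke Lemma \ref{lemplus} directly, simply verifying that two 3RL graphs satisfy its hypotheses. First I would note that 3RL trivially implies Goodman: by definition of 3RL we have $p_0(G_i) = p_3(G_i) = 1/8$, so $(p_0 + p_3)(G_i) = 1/4$, which is exactly the Goodman condition. Thus the first hypothesis of Lemma \ref{lemplus} is immediate for both $G_1$ and $G_2$.

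Next I would compute the full 3-local profile of a 3RL graph. By definition $p_0 = p_3 = 1/8$, and applying Corollary \ref{cor3rl} (which is available because 3RL implies Goodman) we get $p_1 = p_2 = 3/8$. Hence the 3-local profile of any 3RL graph is the palindromic vector $(1/8, 3/8, 3/8, 1/8)$.

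The key observation is that this palindromic structure gives $p_i(G) = p_{3-i}(G)$ for every 3RL graph $G$. Consequently, when both $G_1$ and $G_2$ are 3RL, we have
\[
p_i(G_1) = p_i(G_2) = p_{3-i}(G_2) \qquad \text{for all } 0 \le i \le 3,
\]
so the second hypothesis of Lemma \ref{lemplus} is also satisfied. Applying Lemma \ref{lemplus} then yields that $G_1 \oplus G_2$ is a.a.s. 3RL, completing the proof. There is no genuine obstacle here — the corollary is a direct verification, with the only mild substantive point being the appeal to Corollary \ref{cor3rl} to fill in the middle coordinates of the 3-local profile.
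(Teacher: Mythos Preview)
Your proposal is correct and matches the paper's approach: the paper simply states that two 3RL graphs satisfy the hypotheses of Lemma~\ref{lemplus}, and you have spelled out precisely why (3RL $\Rightarrow$ Goodman, and the palindromic profile $(1/8,3/8,3/8,1/8)$ from Corollary~\ref{cor3rl} gives $p_i(G_1)=p_{3-i}(G_2)$).
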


Lemma \ref{lemplus} and Corollary \ref{corplus} allow us to iterate the construction $G_1\oplus G_2$, taking the aforementioned example of Chung, Graham and Wilson as our starting point. Define $G_1 := K_{n,n} \oplus \overline{K_{n,n}}$ and having constructed $G_\ell$, define $G_{\ell+1} := G_\ell \oplus G_\ell$. Let $\mathcal{G}_\ell = (G_\ell)_{n=1}^{\infty}$, that is, we fix the $\ell$'s iteration and let $n$ go to infinity. By Lemma \ref{lemplus} the sequence $\mathcal{G}_1$ is a.a.s. 3RL and by Corollary \ref{corplus} it follows inductively that for each $\ell>1$ the sequence $\mathcal{G}_\ell$ is a.a.s. 3RL. 

Each graph $G_\ell \in \mathcal{G}_\ell$ consists of $2^\ell$ ``deterministic'' components, each of which is either a copy of $K_{n,n}$ or its complement. The edges connecting vertices from different components are picked independently at random with probability $1/2$ each. Now, if we select $2^\ell$ vertices from $G_\ell$ uniformly at random, the probability of choosing precisely one vertex from each deterministic component is a positive function of $\ell$. Since the obtained graph contains only randomly picked edges, the expected proportion of induced copies of \emph{any} graph $H$ of order $2^\ell$ is also a positive function of $\ell$. Hence, for a fixed $\ell$ and $n$ tending to infinity we will have $p_H(\mathcal{G}_\ell)>0$ for each such graph $G$.  


On the other hand, any subgraph of $G_\ell$ of order $m = 24 \ell \cdot 2^\ell$, contains by the pigeonhole principle either a clique or an independent set of size $12 \ell$. So assuming that $H_\ell$ is $m$-universal, every graph of order $m$ must contain such a set. However, the well-known lower bound on Ramsey numbers (see e.g. \cite{Bela}) states that there exist graphs on $2^{12 \ell/2} > m$ vertices without a clique or an independent set of size $12 \ell$, a contradiction. Thus we conclude that $\mathcal{G}_\ell$ is not $m$-universal, thereby completing the proof of Theorem \ref{thmarbitrary}.   

\subsection*{Remark.}

The standard proof of the bound $R(k,k) > 2^{k/2}$ has stronger consequences. Namely, it shows that for $n=2^{k/2}$ with high probability a random graph on $n$ vertices does not contain an induced copy of $K_k$ or $\overline{K_k}$. Applying this fact to the proof of Theorem \ref{thmarbitrary} shows that the proportion of graphs of order $m$ contained in $\mathcal{H}_\ell$ vanishes as $\ell$ tends to infinity. In other words, not only is $\mathcal{H}_\ell$ not $m$-universal, it actually contains ``very few'' different induced subgraphs of order $m$.
 
\section{Discussion}\label{secdisc}

There are many intriguing open problems regarding random-like sequences and universality. Several of them, including some problems on universality of tournaments, can be found in \cite{Limo}.


It would be very interesting to generalise our results to $m$-random-like sequences, that is, to study universalities of sequences whose densities of $m$-cliques and $m$-anticliques is $2^{- \binom{m}{2}}$. However, this seems to be a much more difficult task, since for $m > 3$ we no longer have the analogue of Goodman's Theorem. In fact, in our proof of Theorem \ref{thm4univ} we made use of the ``lucky coincidence'' that the random-like number of triangles and anti-triangles is also the minimal possible. Disproving a conjecture of Erd\H{o}s \cite{ErdConj}, it was shown by Thomason \cite{Thomason} that this is no longer true for $m \geq 4$. It would therefore also be interesting to investigate universalities of graphs whose densities of cliques and anti-cliques are the \emph{smallest possible} rather than random-like. 


Since 3RL is not enough to ensure 5-universality, one might ask which stronger random-like properties suffice. We propose the following question.  

\begin{ques}\label{conjHT}
Is it true that every sequence $\mathcal{G}$ that is $m$-random-like for every $m\leq M$ is $M$-universal?
\end{ques}  

By Theorem \ref{thm4univ} the answer to Question \ref{conjHT} is affirmative for $M\leq 4$, so $M=5$ is the first open case. Note that the iterated blow-up construction used to prove that 3RL sequences are not necessarily $5$-universal is not 4RL and thus does not provide a negative answer to Question \ref{conjHT}.


Our proof of Theorem \ref{thm4univ} established the existence of every possible $4$-vertex induced subgraph $H$ in a 3RL sequence $\mathcal{G}$. As noted in the Introduction, it follows from the induced graph removal lemma that in fact the corresponding density $p_H(\mathcal{G})$ must be bounded away from $0$. It would be interesting to determine, for every 4-vertex graph $H$, the minimum density $p_H(\mathcal{G})$ over all 3RL sequences $\mathcal{G}$.

 
\bibliography{densities}
\bibliographystyle{plain}

\end{document}